\documentclass[12pt,a4paper]{amsart}
\usepackage{cite}
\usepackage{comment}
\usepackage[T1]{fontenc}
\usepackage{lmodern}
\usepackage{amsmath}
\usepackage{amsfonts}
\usepackage{amssymb}
\usepackage{mathtools}
\usepackage{latexsym}
\usepackage{parskip}
\usepackage{enumerate}
\usepackage{lipsum}
\usepackage[utf8]{inputenc}
\usepackage{amsmath}
\usepackage{amsfonts}
\usepackage{amssymb}
\usepackage{latexsym}

\usepackage{verbatim}

\usepackage{multicol}

\usepackage[normalem]{ulem}

\usepackage{parskip}
\usepackage{fullpage}
\usepackage[pdftex]{graphicx}

\usepackage{enumerate}
\numberwithin{equation}{section}

\usepackage{amsthm}

\DeclareMathOperator{\e}{e}

\newtheorem{theorem}{Theorem}[section]
\newtheorem{corollary}[theorem]{Corollary}

\newtheorem{lemma}[theorem]{Lemma}

\theoremstyle{definition}
\newtheorem{definition}[theorem]{Definition}
\newtheorem{remark}[theorem]{Remark}
\newtheorem{example}[theorem]{Example}

\newcommand{\ep}{\varepsilon}

\newcommand{\N}{\mathbb{N}}
\newcommand{\R}{\mathbb{R}}
\newcommand{\Id}{\mathrm{Id}}

\newcommand{\Ker}{\mathrm{Ker}}

\newcounter{exemplo}[section]

\newcommand{\ovtheta}{\overline{\theta}}

\usepackage[usenames,dvipsnames,svgnames,table]{xcolor}

\begin{document}
\title{Nonuniform dichotomies via time rescaling}
\author{Davor Dragi\v{c}evi\'c}
\address{Faculty of Mathematics, University of Rijeka, Croatia}
\email[Davor Dragi\v cevi\' c]{ddragicevic@math.uniri.hr}
\author{C\'{e}sar M. Silva}
\address{Centro de Matem\'atica e Aplica\c{c}\~oes\\
Universidade da Beira Interior\\
6201-001 Covilh\~a\\
Portugal}
\email[C\'{e}sar M. Silva]{csilva@ubi.pt}


\maketitle

\begin{abstract}
We establish characterizations of general nonuniform dichotomies for linear nonautonomous dynamical systems in terms of nonuniform exponential dichotomies of appropriately time-rescaled systems. In discrete time, for a class of slowly growing rates, such a characterization requires an additional condition involving a suitable notion of nonuniform ordinary dichotomy, which can be omitted for invertible systems. In continuous time, we obtain a more general characterization for a large class of growth rates, without imposing this additional condition. Our approach does not rely on adapted Lyapunov norms and applies to evolution families on arbitrary Banach spaces. As applications, we show how several results concerning general nonuniform dichotomies can be derived from their counterparts for exponential dichotomies. These include results on robustness, admissibility, Lyapunov functions, smooth linearization, stable manifolds, and Sacker–Sell spectral theory.
\end{abstract}
\section{Introduction}

The notion of exponential dichotomy, introduced by Perron~\cite{Per}, plays an important role in the qualitative theory of nonautonomous dynamical systems. As with the classical notion of hyperbolicity for autonomous dynamical
systems, an exponential dichotomy requires, at each time, a splitting of the phase space
into two complementary subspaces, called the stable and unstable subspaces. The dynamics
contracts exponentially forward in time along the stable subspace and expands exponentially
forward in time along the unstable subspace. However, due to the flexibility of the nonautonomous setting, it is natural to consider more general asymptotic behavior in which the rates of contraction and expansion are not necessarily exponential. To the best of our knowledge, these generalized dichotomies were first studied by Martin~\cite{Martin}, Muldowney~\cite{M} and 
 Naulin and Pinto~\cite{NP}. More recently, Barreira and Valls~\cite{BV-DCDS} initiated the study of \emph{nonuniform} dichotomies with general growth rates, with subsequent works of Bento and Silva~\cite{BS2, BS4} considered even more general classes of nonuniform dichotomies. We refer to:
 \begin{itemize}
\item \cite{BVJFA,BS3,BS4} for results dealing with  invariant manifold theory of semilinear nonautonomous systems under the assumption that the linear part  admits a general nonuniform dichotomy;
\item \cite{BD, D1, D2, DLP,SilvaCPAA} for characterizations of general dichotomies in terms of the so-called admissibility property;
\item \cite{B,BDV2, CJ, JG,Silva} for  development of the Sacker-Sell spectral theory (and its applications) with respect to general dichotomies;
\item \cite{BDZ, BV-09, CR, DS, Torres} for linearization results devoted to nonautonomous nonlinear systems whose linear part admits a general nonuniform dichotomy;
\item \cite{BD,BV,Chu,SilvaCPAA} for results dealing with the robustness property of general nonuniform dichotomies;
\item comparison of generalized dichotomies associated to different growth rates~\cite{GJ,JLR}.
 \end{itemize}

 In our previous work~\cite{DS} (building on the earlier work~\cite{DSS1} on polynomial dichotomies), we showed that, for discrete nonautonomous dynamics and a class of growth rates $\mu$ exhibiting slow growth, a uniform $\mu$-dichotomy can be characterized in terms of ordinary dichotomy and an exponential dichotomy of another discrete system obtained from the original one via appropriate time rescaling. Similar results for the case of continuous time have been obtained in~\cite{DD}.

 However, the results from~\cite{DS} are only partially applicable to the case of nonuniform general dichotomies. More precisely, if a system exhibits a nonuniform dichotomy with a slow growth rate $\mu$, then the associated system constructed using time rescaling exhibits nonuniform exponential dichotomy. On the other hand, to the best of our understanding, the results from~\cite{DS} do not provide a satisfactory converse result. The difficulty stems from the fact that adapted (Lyapunov) norms that can be used to eliminate the nonuniform part of the exponential dichotomy of the time rescaled system may fail to generate adapted norms for the original system.

 In order to eliminate this difficulty, in the present paper we propose a new approach which does not rely on Lyapunov norms. In the case of discrete time, we show that a nonuniform dichotomy with a slow growth rate can be characterized in terms of a newly introduced notion of a nonuniform ordinary dichotomy with a certain scale (for the original system) together with a nonuniform exponential dichotomy of an associated time rescaled system. In the particular case of invertible systems, the use of nonuniform ordinary dichotomy can be eliminated.

We also establish corresponding results for the case of continuous time by dealing with arbitrary evolution families. The  results obtained in the case of continuous time are substantially stronger. Namely, we are able to characterize general nonuniform dichotomies with growth rates via exponential dichotomies of a time-rescaled evolution family for a large class of growth rates. Thus, we do not need the notion of a nonuniform ordinary dichotomy on a certain scale. In addition, we are not restricted to the case of slowly growing rates. The reason why the case of discrete time is harder lies in the fact that discrete growth rates do not, in general,  map integers to integers, making the construction of the time rescaled system more complicated.

Exploiting the simplicity of our results in the case of continuous time, we give a number of applications. More precisely, we discuss the robustness property of general nonuniform dichotomies, as well as their characterization in terms of admissibility and Lyapunov functions. We also provide new results related to the smooth linearization and stable manifolds of nonautonomous and nonlinear dynamics under the assumption that the linear part admits a general nonuniform dichotomy. Finally, we show that it is essentially sufficient to study the Sacker-Sell spectrum defined with respect to the notion of a (nonuniform) exponential dichotomy, as the general case can be reduced to this one by using time rescaling.

Most of the results obtained in these applications are new, although (as we point out in remarks) similar results appeared in the literature.  However, previous approaches established such result directly, while we show that one can essentially read them from the corresponding results dealing with exponential dichotomies.
\section{The case of discrete time}
	Throughout this paper $X=(X, \| \cdot \|)$ is an arbitrary Banach space. By $\mathcal B(X)$ we denote the space of all bounded linear operators on $X$ equipped with the operator norm, which we also denote by $\| \cdot \|$.
	Let $\{A_n\}_{n \in \mathbb{N}} \subset \mathcal{B}(X)$ and consider the nonautonomous linear system
	\begin{equation}\label{eq:Lin}
	x_{n+1}=A_n x_n, \ \ n \in \mathbb{N}. \tag{A}
	\end{equation}
	Let $\Phi_A=\left\{\Phi_A(m, k)\right\}_{m \ge k}$ be the evolution family associated with~\eqref{eq:Lin} defined by
	\begin{equation}\label{eq:evolution-family-Lin}
	\Phi_A(m, k)
	=\left\{\begin{array}{ll}
		A_{m-1} \cdots A_k, & m>k \\
		\Id, & m=k.
	\end{array} \right.
	\end{equation}
	Here $\Id$ denotes the identity operator on $X$. If operators $A_n$ are invertible we can define $\Phi_A(m, k):=\Phi(k, m)^{-1}$ for $k<m$.
We say that a sequence $\mu=(\mu_n)_{n \in \N_0}$ is a discrete growth rate if it is positive, strictly increasing, and satisfies $\lim\limits_{n\to \infty} \mu_n = +\infty$ and $\mu_0=1$. 

We now recall a construction introduced in~\cite{DS}.
Given a discrete growth rate $\mu=(\mu_n)_{n \in \mathbb{N}_0}$, we can associate to it the continuous, strictly increasing function $\widetilde \mu: [0,+\infty) \to [1,+\infty )$ given by
\[
\widetilde \mu(t)=
\begin{cases}
	\mu_n & \text{ if } \ t=n\\
	\mu_n+(t-n)(\mu_{n+1}-\mu_n) & \text{ if } \ n<t<n+1
\end{cases}, \quad n \in \N_0.
\]
Clearly, this is an invertible function and its inverse ${\widetilde \mu}^{-1}:[1,+\infty) \to [0,+\infty)$ is the continuous, strictly increasing function given by
\[
\widetilde \mu^{-1}(t)=
\begin{cases}
	n & \text{ if } \ t=\mu_n\\
	n+\dfrac{t-\mu_n}{\mu_{n+1}-\mu_n} & \text{ if } \ \mu_n<t<\mu_{n+1}
\end{cases}, \quad n \in \N_0.
\]

\begin{definition}\label{def:ordinary-dich}
    We say that system~\eqref{eq:Lin} (or the sequence $\{A_n\}_{n\in \N}$) admits a nonuniform  $\mu$-ordinary dichotomy with exponent $c\ge 0$ and scale $M>0$ 
 if there are a sequence of projections $\{P_k\}_{k \in \mathbb{N}}$ on $X$ and $K \geq 1$ such that:
	\begin{enumerate}[(od1)]
		\item\label{def:od1} $A_k P_k=P_{k+1} A_k$, for all $k \in \mathbb{N}$;
		\item\label{def:od3} for every $k \in \mathbb{N}, {A_k}_{\mid}: \operatorname{Ker} P_k \rightarrow \operatorname{Ker} P_{k+1}$ is an isomorphism, where ${A_k}_{\mid}$ denotes the restriction of $A_k$ to $\Ker P_k$;
		\item\label{def:od4} $\left\|\Phi_A(m, k) P_k \right\| \leq K\mu_k^c $, for all $m \ge k$ with $\mu_m/\mu_k \le M$;
		\item\label{def:od5} $\left\|\Phi_A(m, k) (\Id-P_k)\right\| \leq K\mu_k^c$, for all $m\le k$ with $\mu_k/\mu_m\le M$.
	\end{enumerate}
\end{definition}

We remark that~(od\ref{def:od1}) is equivalent to the following requirement:
\begin{equation}\label{eq:compatibility-evol-family}
	\Phi_A(m, n) P_n=P_m \Phi_A(m,n), \quad \text{for all $m \ge  n \ge 1$}
\end{equation}
and~(od\ref{def:od3}) is equivalent to the invertibility of $\Phi_A(m,n)_{\mid}: \operatorname{Ker} P_n \rightarrow \operatorname{Ker} P_m$, for all $m \ge n \ge 1$.
Note that, in~(od\ref{def:od5}), $\Phi_A(m,k)$ denotes the inverse of the restriction of $\Phi_A(k,m)$ to $\Ker P_m$.

\begin{definition}\label{def:mu-dich-norms}
Let $\mu=(\mu_n)_{n\in \N_0}$ be a discrete growth rate.
We say that system~\eqref{eq:Lin} (or the sequence $\{A_n\}_{n\in \N}$) admits a nonuniform $\mu$-dichotomy
with exponents $(\nu,\ep)$ where $\nu>0$ and $\ep \ge 0$ if there is a constant $N \geq 1$ and a sequence of projections $\{P_k\}_{k \in \mathbb{N}}$ on $X$ such that (od\ref{def:od1})--(od\ref{def:od3}) in Definition~\ref{def:ordinary-dich} are satisfied and additionally:
\begin{enumerate}[($\mu$1)]
	\item\label{def:mu1} 
	$\left\|\Phi_A(m, k) P_k \right\| \leq N \left(\frac{\mu_m}{\mu_k}\right)^{-\nu} \mu_k^\ep $, for every  $m \ge k$;
	\item\label{def:mu2} $\left\|\Phi_A(m, k) (\Id-P_k)\right\|\leq N \left(\frac{\mu_k}{\mu_m}\right)^{-\nu} \mu_k^\ep$, for every  $m \le k$.
\end{enumerate}
\end{definition}

\begin{remark}
When, in Definition~\ref{def:mu-dich-norms}, we set $\mu_n=e^n$, for $n \in \N_0$, we obtain the notion of nonuniform exponential dichotomy.
Similarly, when we let $\mu_n=n+1$ for $n \in \N_0$, we have the notion of nonuniform polynomial dichotomy. 
When $\ep=0$ we recover the notion of (uniform) $\mu$-dichotomy.
\end{remark}

\begin{remark}
Suppose that the system~\eqref{eq:Lin} admits a nonuniform $\mu$-dichotomy with exponents $(\nu, \varepsilon)$. Then~\eqref{eq:Lin} admits a nonuniform $\mu$-ordinary dichotomy with exponent $\varepsilon$ at any scale $M>0$.
\end{remark}
Following~\cite{DS}, 
given discrete growth rates $\mu=(\mu_n)_{n \in\N_0}$ and $\eta=(\eta_n)_{n \in\N_0}$, we associate with the system~\eqref{eq:Lin} the family of linear operators $Q^{\mu,\eta}=\{Q^{\mu,\eta}_n\}_{n \in \mathbb{N}} \subset \mathcal{B}(X)$, given by 
\begin{equation}\label{eq:Q-mu-eta}
Q^{\mu,\eta}_n=\Phi_A\left(\lfloor \widetilde\mu^{-1}(\eta_n) \rfloor +1, \  \lfloor \widetilde\mu^{-1}(\eta_{n-1}) \rfloor +1 \right), \quad \forall n \in \mathbb{N}.
\end{equation}
Consider the nonautonomous linear system
\begin{equation}\label{eq:Lin-Q}
	y_{n+1}=Q^{\mu,\eta}_n \, y_n, \quad n \in \mathbb{N}. \tag{$Q^{\mu,\eta}$}
\end{equation}
It is easy to obtain the evolution family $\Phi_{Q^{\mu,\eta}}=\left\{\Phi_{Q^{\mu,\eta}}(m,n)\right\}_{m \ge n}$ associated to~\eqref{eq:Lin-Q} in terms of the evolution family $\Phi_A=\left\{\Phi_A(m,n)\right\}_{m \ge n}$ associated to~\eqref{eq:Lin}:
\[
\begin{split}
\Phi_{Q^{\mu,\eta}}(m,n)
& =\prod_{k=n}^{m-1}Q^{\mu,\eta}_k=\prod_{k=n}^{m-1}\Phi_A(\lfloor\widetilde\mu^{-1}(\eta_{k})\rfloor +1,\,\lfloor\widetilde\mu^{-1}(\eta_{k-1})\rfloor +1)\\
& =\Phi_A(\lfloor\widetilde\mu^{-1}(\eta_{m-1})\rfloor +1,\,\lfloor\widetilde\mu^{-1}(\eta_{n-1})\rfloor +1), \quad m\ge n.
\end{split}
\]

In the sequel, given a sequence $\{P_k\}_{k\in \mathbb N}$ of bounded projections on $X$, 
we will also consider the sequence of projections $\{P^\eta_k\}_{k \in \mathbb{N}}$ given by $P^\eta_k=P_{\lfloor \widetilde{\mu}^{-1}(\eta_{k-1})\rfloor +1}$, for all $k \in \mathbb{N}$.

Before we proceed, we present a simple technical lemma that will be useful in the sequel.

\begin{lemma}\label{lem:scale}
	Let $\theta \ge 1$ and $\mu,\eta$ be growth rates satisfying 
	\begin{equation}\label{eq:bound-rates-1-0}
		\frac{\mu_{n+1}}{\mu_n}\le \theta \quad \text{ and } \quad \frac{\eta_{n+1}}{\eta_n}\le \theta, \quad \text{for all} \quad n \in \N_0.
	\end{equation} 
	Then, 
	\begin{enumerate}[(i)]
		\item\label{lem:scale1} for every $r\in\mathbb N_0$, $\eta_r\le \mu_{\lfloor\widetilde\mu^{-1}(\eta_r)\rfloor+1}\le \theta \eta_r$.
		\item\label{lem:scale2} for every $r\in\mathbb N_0$, $\dfrac{\mu_{\lfloor\widetilde\mu^{-1}(\eta_{r+1})\rfloor+1}}{\mu_{\lfloor\widetilde\mu^{-1}(\eta_r)\rfloor+1}} \le \theta^2$.
		\item\label{lem:scale3} if $\lfloor\widetilde\mu^{-1}(\eta_{n+1})\rfloor+1\ge m\ge \lfloor\widetilde\mu^{-1}(\eta_n)\rfloor+1$, then
		$$\dfrac{\mu_m}{\mu_{\lfloor\widetilde\mu^{-1}(\eta_n)\rfloor+1}} \le \theta^2 \quad \text{and} \quad \dfrac{\mu_{\lfloor\widetilde\mu^{-1}(\eta_{n+1})\rfloor+1}}{\mu_m} \le \theta^2$$
	\end{enumerate}
\end{lemma}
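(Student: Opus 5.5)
The proof reduces everything to one bracketing fact about $\widetilde\mu^{-1}$. Fix $r\in\N_0$ and set $j=\lfloor\widetilde\mu^{-1}(\eta_r)\rfloor$. Since $\eta_r\ge\eta_0=1=\mu_0$, the point $\widetilde\mu^{-1}(\eta_r)$ is well defined. Because $\widetilde\mu$ is strictly increasing and $\widetilde\mu(n)=\mu_n$, we have $j\le \widetilde\mu^{-1}(\eta_r)<j+1$, and hence
\[
\mu_j\le \eta_r<\mu_{j+1}.
\]

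For (i), the right-hand inequality of this bracketing gives $\eta_r\le\mu_{j+1}$. The left-hand inequality together with \eqref{eq:bound-rates-1-0} gives $\mu_{j+1}\le\theta\mu_j\le\theta\eta_r$. This proves (i).

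For (ii), apply (i) with indices $r+1$ and $r$. This gives
\[
\mu_{\lfloor\widetilde\mu^{-1}(\eta_{r+1})\rfloor+1}\le\theta\eta_{r+1}
\quad\text{and}\quad
\mu_{\lfloor\widetilde\mu^{-1}(\eta_r)\rfloor+1}\ge\eta_r,
\]
so the quotient is at most $\theta\eta_{r+1}/\eta_r\le\theta^2$, using the hypothesis on $\eta$.

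For (iii), note that $\mu$ is strictly increasing. If $\lfloor\widetilde\mu^{-1}(\eta_n)\rfloor+1\le m\le\lfloor\widetilde\mu^{-1}(\eta_{n+1})\rfloor+1$, then
\[
\mu_{\lfloor\widetilde\mu^{-1}(\eta_n)\rfloor+1}\le\mu_m\le\mu_{\lfloor\widetilde\mu^{-1}(\eta_{n+1})\rfloor+1}.
\]
Each of the two quotients in (iii) is therefore bounded by the quotient in (ii) with $r=n$, which is at most $\theta^2$.

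The only delicate point is the bracketing step, specifically the case where $\eta_r$ equals some $\mu_n$. In that case $\widetilde\mu^{-1}(\eta_r)=n$, so $j=n$, and we get $\eta_r=\mu_j<\mu_{j+1}$, which is still consistent with the bracketing. Everything else follows directly from monotonicity and the ratio bounds.
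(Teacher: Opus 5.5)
Your proof is correct and follows essentially the same route as the paper: your bracketing $\mu_j\le\eta_r<\mu_{j+1}$ is what the paper obtains by applying $\widetilde\mu$ to $\lfloor\widetilde\mu^{-1}(\eta_r)\rfloor\le\widetilde\mu^{-1}(\eta_r)\le\lfloor\widetilde\mu^{-1}(\eta_r)\rfloor+1$, and your arguments for (ii) and (iii) coincide with the paper's. You also check explicitly that $\eta_r\ge\eta_0=1$, so $\widetilde\mu^{-1}(\eta_r)$ is well defined, which the paper leaves implicit.
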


\begin{proof}
	Let $r\in\mathbb N_0$, $t=\widetilde{\mu}^{-1}(\eta_r)$. By the definition of $\widetilde{\mu}$ and since it is increasing, we have
	\[
	\eta_r=\tilde\mu(\tilde\mu^{-1}(\eta_r))\le \tilde\mu(\lfloor\tilde\mu^{-1}(\eta_r)\rfloor+1)=\mu_{\lfloor\tilde\mu^{-1}(\eta_r)\rfloor+1}
	\]
	and also, by~\eqref{eq:bound-rates-1-0},
	\[
	\mu_{\lfloor\tilde\mu^{-1}(\eta_r)\rfloor+1}= \frac{\mu_{\lfloor\tilde\mu^{-1}(\eta_r)\rfloor+1}}{\mu_{\lfloor\tilde\mu^{-1}(\eta_r)\rfloor}}\mu_{\lfloor\tilde\mu^{-1}(\eta_r)\rfloor}\le \theta \mu_{\lfloor\tilde\mu^{-1}(\eta_r)\rfloor}=\theta \widetilde{\mu}(\lfloor\widetilde{\mu}^{-1}(\eta_r)\rfloor)\le\theta \widetilde{\mu}(\widetilde{\mu}^{-1}(\eta_r))=\theta\eta_r.
	\]
	This proves \eqref{lem:scale1}. 
	
	Since, by~\eqref{lem:scale1}, we have $\mu_{\lfloor\widetilde\mu^{-1}(\eta_r)\rfloor +1}\ge \eta_r$ and $\mu_{\lfloor\widetilde\mu^{-1}(\eta_{r+1})\rfloor +1}\le \theta \eta_{r+1}$, using~\eqref{eq:bound-rates-1-0} we have
	\[
	\frac{\mu_{\lfloor\widetilde\mu^{-1}(\eta_{r+1})\rfloor+1}}{\mu_{\lfloor\widetilde\mu^{-1}(\eta_r)\rfloor+1}}
	\le \frac{\theta \eta_{r+1}}{\eta_r}
	\le \theta^2. 
	\]
	Thus, we obtain \eqref{lem:scale2}. 
	
	If $\lfloor\widetilde\mu^{-1}(\eta_{n+1})\rfloor+1\ge m\ge \lfloor\widetilde\mu^{-1}(\eta_n)\rfloor+1$, then $\mu_{\lfloor\widetilde\mu^{-1}(\eta_{n+1})\rfloor+1}\ge \mu_m\ge \mu_{\lfloor\widetilde\mu^{-1}(\eta_n)\rfloor+1}$ and thus, from~\eqref{lem:scale2},
	\[
	\frac{\mu_m}{\mu_{\lfloor\widetilde\mu^{-1}(\eta_n)\rfloor+1}}
	\le \frac{\mu_{\lfloor\widetilde\mu^{-1}(\eta_{n+1})\rfloor+1}}{\mu_{\lfloor\widetilde\mu^{-1}(\eta_n)\rfloor+1}}
	\le\theta^2
	\quad \text{ and } \quad
	\frac{\mu_{\lfloor\widetilde\mu^{-1}(\eta_{n+1})\rfloor+1}}{\mu_m}
	\le \frac{\mu_{\lfloor\widetilde\mu^{-1}(\eta_{n+1})\rfloor+1}}{\mu_{\lfloor\widetilde\mu^{-1}(\eta_n)\rfloor+1}}
	\le\theta^2.
	\]
	Hence, \eqref{lem:scale3} holds. 
\end{proof}

The following results relate the existence of a nonuniform $\mu$-dichotomy for system~\eqref{eq:Lin} to that of a nonuniform $\eta$-dichotomy for system~\eqref{eq:Lin-Q}.

\begin{theorem}\label{teo:main1}
    Let $\theta\ge 1$ and $\mu,\eta$ be growth rates satisfying 
	\begin{equation}\label{eq:bound-rates-1-a}
		\frac{\mu_{n+1}}{\mu_n}\le \theta \quad \text{ and } \quad \frac{\eta_{n+1}}{\eta_n}\le \theta, \quad \text{for all} \quad n \in \N_0.
	\end{equation} 
	If system~\eqref{eq:Lin} admits a nonuniform $\mu$-dichotomy 
    relative to a sequence of projections $\{P_k\}_{k \in \N}$, then system~\eqref{eq:Lin-Q} admits a nonuniform $\eta$-dichotomy 
    relative to the sequence of projections $\left\{P^\eta_k\right\}_{k \in \mathbb{N}}$.
\end{theorem}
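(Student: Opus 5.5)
Write $\tau_n:=\lfloor\widetilde\mu^{-1}(\eta_n)\rfloor+1$ for $n\in\N_0$. Then $Q^{\mu,\eta}_n=\Phi_A(\tau_n,\tau_{n-1})$, $P^\eta_k=P_{\tau_{k-1}}$, and, as computed above, $\Phi_{Q^{\mu,\eta}}(m,n)=\Phi_A(\tau_{m-1},\tau_{n-1})$ for $m\ge n$. The plan is to verify the axioms of Definition~\ref{def:mu-dich-norms} for $\{Q^{\mu,\eta}_n\}$ and $\{P^\eta_k\}$ by transporting each property of $\{A_n\}$ and $\{P_k\}$ along the subsequence $(\tau_n)$. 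The comparison between $\mu$ and $\eta$ values is controlled by Lemma~\ref{lem:scale}\eqref{lem:scale1}:
\[
\eta_r\le\mu_{\tau_r}\le\theta\eta_r .
\]
Note that $(\tau_n)$ is nondecreasing, since $\eta$ and $\widetilde\mu^{-1}$ are increasing. It may have repetitions, which is harmless because then $\Phi_A(\tau_n,\tau_{n-1})=\Id$.

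First, the structural conditions. By \eqref{eq:compatibility-evol-family},
\[
Q^{\mu,\eta}_kP^\eta_k=\Phi_A(\tau_k,\tau_{k-1})P_{\tau_{k-1}}=P_{\tau_k}\Phi_A(\tau_k,\tau_{k-1})=P^\eta_{k+1}Q^{\mu,\eta}_k,
\]
which is (od\ref{def:od1}). Condition (od\ref{def:od3}) follows from the remark after Definition~\ref{def:ordinary-dich}: $\Phi_A(m,n)|\colon\Ker P_n\to\Ker P_m$ is invertible for $m\ge n$. The same remark identifies the backward evolution of $Q^{\mu,\eta}$ on the kernels: for $m\le n$ we have $\Phi_{Q^{\mu,\eta}}(m,n)(\Id-P^\eta_n)=\Phi_A(\tau_{m-1},\tau_{n-1})(\Id-P_{\tau_{n-1}})$, since both are inverses of the same restricted operator.

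Second, the estimates. For $m\ge n$, apply ($\mu$\ref{def:mu1}) with indices $\tau_{m-1}\ge\tau_{n-1}$:
\[
\|\Phi_{Q^{\mu,\eta}}(m,n)P^\eta_n\|\le N\Big(\frac{\mu_{\tau_{m-1}}}{\mu_{\tau_{n-1}}}\Big)^{-\nu}\mu_{\tau_{n-1}}^{\ep}\le N\Big(\frac{\eta_{m-1}}{\theta\eta_{n-1}}\Big)^{-\nu}(\theta\eta_{n-1})^{\ep}.
\]
It remains to replace $\eta_{m-1}/\eta_{n-1}$ by $\eta_m/\eta_n$ and $\eta_{n-1}$ by $\eta_n$. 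From \eqref{eq:bound-rates-1-a},
\[
\frac{\eta_{m-1}}{\eta_{n-1}}=\frac{\eta_{m-1}}{\eta_m}\cdot\frac{\eta_m}{\eta_n}\cdot\frac{\eta_n}{\eta_{n-1}}\ge\theta^{-1}\frac{\eta_m}{\eta_n}.
\]
Also $\eta_{n-1}\le\eta_n$, because $\eta$ is increasing. Hence
\[
\|\Phi_{Q^{\mu,\eta}}(m,n)P^\eta_n\|\le N\theta^{2\nu+\ep}\Big(\frac{\eta_m}{\eta_n}\Big)^{-\nu}\eta_n^{\ep}.
\]
For the unstable part with $m\le n$, ($\mu$\ref{def:mu2}) gives the bound $N(\mu_{\tau_{n-1}}/\mu_{\tau_{m-1}})^{-\nu}\mu_{\tau_{n-1}}^{\ep}$. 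The same two comparisons yield
\[
\|\Phi_{Q^{\mu,\eta}}(m,n)(\Id-P^\eta_n)\|\le N\theta^{2\nu+\ep}\Big(\frac{\eta_n}{\eta_m}\Big)^{-\nu}\eta_n^{\ep}.
\]
So the conclusion holds with the same exponents $(\nu,\ep)$ and constant $N\theta^{2\nu+\ep}$.

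The main obstacle is only bookkeeping: keeping the index shift $n-1$ versus $n$ straight, and making sure each inequality from Lemma~\ref{lem:scale}\eqref{lem:scale1} is applied in the direction that weakens the bound. Negative powers require the lower bound on a ratio, while the nonuniform factor $\mu^{\ep}$ requires an upper bound. The exponent $\nu>0$ ensures these directions are consistent. Beyond this, I expect no genuine difficulty; in particular, no slow-growth or ordinary-dichotomy hypothesis is needed in this direction.
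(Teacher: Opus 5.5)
Your proposal is correct and follows essentially the same route as the paper's proof. You transport (od1) and (od2) along your indices $\tau_n=\lfloor\widetilde\mu^{-1}(\eta_n)\rfloor+1$, and you obtain the estimates from Lemma~\ref{lem:scale}(i) together with $\eta_{n+1}/\eta_n\le\theta$, via the comparison $\mu_{\tau_{m-1}}/\mu_{\tau_{n-1}}\ge\theta^{-2}\eta_m/\eta_n$, which gives the same constant $N\theta^{2\nu+\ep}$ as the paper; your explicit identification of the backward evolution of $Q^{\mu,\eta}$ on the kernels is a detail the paper leaves implicit.
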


\begin{proof}
	Assume that the system~\eqref{eq:Lin} admits a nonuniform $\mu$-dichotomy
    relative to a sequence of projections $\{P_k\}_{k \in \mathbb{N}}$ and let $N \geq 1$, $\ep\ge 0$ and $\nu>0$ be given by Definition~\ref{def:mu-dich-norms}. Then, for all $m \ge k$, we have
	\begin{equation}\label{mu1}\left\|\Phi_A(m, k) P_k \right\| \leq N \left(\frac{\mu_m}{\mu_k}\right)^{-\nu}\mu_k^\ep\end{equation}
	and, for all $m \le k$, we have
	\begin{equation}\label{mu2}\left\|\Phi_A(m, k) (\Id-P_k)\right\| \leq N \left(\frac{\mu_k}{\mu_m}\right)^{-\nu}\mu_k^\ep.\end{equation}
	
	
	
	On the other hand, by~\eqref{eq:compatibility-evol-family}, we obtain, for all $k \in \mathbb{N}$,
	\[
	\begin{split}
		Q^{\mu,\eta}_k P^\eta_k 
		& =\Phi_A\left(\lfloor \widetilde\mu^{-1}(\eta_k) \rfloor +1, \  \lfloor \widetilde\mu^{-1}(\eta_{k-1}) \rfloor +1\right) P_{\lfloor \widetilde{\mu}^{-1}(\eta_{k-1})\rfloor +1} \\
		& =P_{\lfloor \widetilde{\mu}^{-1}(\eta_{k})\rfloor +1} \Phi_A\left(\lfloor \widetilde\mu^{-1}(\eta_k) \rfloor +1, \  \lfloor \widetilde\mu^{-1}(\eta_{k-1}) \rfloor +1\right) =P^\eta_{k+1} Q^{\mu,\eta}_k.
	\end{split}
	\]
	
	By~(od\ref{def:od3}) we conclude that, for every $k \in \mathbb{N}$, $$\Phi_A\left(\lfloor \widetilde\mu^{-1}(\eta_k) \rfloor +1, \  \lfloor \widetilde\mu^{-1}(\eta_{k-1}) \rfloor +1\right)_\mid: \operatorname{Ker} P_{\lfloor \widetilde\mu^{-1}(\eta_{k-1}) \rfloor +1} \rightarrow \operatorname{Ker} P_{\lfloor \widetilde\mu^{-1}(\eta_k) \rfloor +1}$$ 
	is invertible. In other words, ${Q^{\mu,\eta}_k}_{\mid}: \operatorname{Ker} P^\eta_k \rightarrow \operatorname{Ker} P^\eta_{k+1}$ is invertible.
	
    Let $k \in \mathbb{N}$. Using Lemma~\ref{lem:scale} and \eqref{eq:bound-rates-1-a}, we obtain, for $m,k \in \N$,
    \begin{equation}\label{eq:bound3}
    \frac{\mu_{\lfloor\widetilde\mu^{-1}(\eta_{m-1})\rfloor +1}}{\mu_{\lfloor\widetilde\mu^{-1}(\eta_{k-1})\rfloor +1}} \ge \frac{\eta_{m-1}}{\theta\eta_{k-1}} \ge \frac{\eta_{m}}{\theta^2\eta_k}.
    \end{equation}
    Consequently, for $m\ge k$, using \eqref{eq:bound-rates-1-a}, \eqref{mu1} and \eqref{eq:bound3},
    \[
	\begin{split}
		\left\|\Phi_{Q^{\mu,\eta}}(m,k)P_k^\eta \right\|
		& =\left\|\Phi_A(\lfloor\widetilde\mu^{-1}(\eta_{m-1})\rfloor +1,\,\lfloor\widetilde\mu^{-1}(\eta_{k-1})\rfloor+1)P_{\lfloor\widetilde\mu^{-1}(\eta_{k-1})\rfloor +1 }\right \|\\
		& \leq N \left(\frac{\mu_{\lfloor\widetilde\mu^{-1}(\eta_{m-1})\rfloor +1}}{\mu_{\lfloor\widetilde\mu^{-1}(\eta_{k-1})\rfloor +1}}\right)^{-\nu}
		(\mu_{\lfloor\widetilde\mu^{-1}(\eta_{k-1})\rfloor +1})^\ep \\
		& \leq N\theta^{2\nu+\ep} \left(\frac{\eta_{m}}{\eta_k}\right)^{-\nu}\eta_{k}^\ep.
	\end{split}
	\]
    Similarly, for $1\le m\le k$, using \eqref{eq:bound-rates-1-a}, \eqref{mu2} and \eqref{eq:bound3},
    \[
	\begin{split}
		\left\|\Phi_{Q^{\mu,\eta}}(m,k)(\Id-P^\eta_k)\right\|
		& =\left\|\Phi_A(\lfloor\widetilde\mu^{-1}(\eta_{m-1})\rfloor +1,\,\lfloor\widetilde\mu^{-1}(\eta_{k-1})\rfloor+1)(\Id-P_{\lfloor\widetilde\mu^{-1}(\eta_{k-1})\rfloor+1})\right\|\\
		& \leq N \left(\frac{\mu_{\lfloor\widetilde\mu^{-1}(\eta_{k-1})\rfloor+1}}{\mu_{\lfloor\widetilde\mu^{-1}(\eta_{m-1})\rfloor+1}}\right)^{-\nu}
		(\mu_{\lfloor\widetilde 
        \mu^{-1}(\eta_{k-1})\rfloor+1})^\ep \\
	    & \leq N\theta^{2\nu+\ep} \left(\frac{\eta_k}{\eta_m} \right)^{-\nu} \eta_k^\ep.	 
    \end{split}
    \]

It follows that~\eqref{eq:Lin-Q} admits an $\eta$-dichotomy 
relative to the sequence of projections $\left\{P^\eta_k\right\}_{k \in \mathbb{N}}$. The proof of the theorem is completed.
\end{proof}

\begin{theorem}\label{teo:main}
Let $\theta \ge 1$ and $\mu, \eta$ be growth rates satisfying
\begin{equation}\label{eq:bound-rates-1-b}
	\frac{\mu_{n+1}}{\mu_n}\le \theta \quad \text{and} \quad \frac{\eta_{n+1}}{\eta_n} \le \theta, \quad \text{for all} \quad n \in \N_0.
\end{equation} 
If system~\eqref{eq:Lin} admits a nonuniform $\mu$-ordinary dichotomy 
relative to a sequence of projections $\{P_k\}_{k \in \mathbb{N}}$, with exponent $c$ and scale $\theta^2$, and system~\eqref{eq:Lin-Q} admits a nonuniform $\eta$-dichotomy 
relative to the sequence of projections $\left\{P^\eta_k\right\}_{k \in \mathbb{N}}$, with exponents $(\nu,\ep)$, then, if $\nu>c$, system~\eqref{eq:Lin} admits a nonuniform $\mu$-dichotomy 
relative to a sequence of projections $\{P_k\}_{k \in \mathbb{N}}$, with exponents $(\nu-c,\ep+2c)$.

\end{theorem}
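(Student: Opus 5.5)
The idea is to decompose $\Phi_A(m,k)P_k$ into three pieces. Two short pieces, at the beginning and end, lie within a single block of the rescaled grid, and these are controlled by the ordinary dichotomy at scale $\theta^2$. The long middle piece is a product of whole blocks, which is exactly an evolution operator of~\eqref{eq:Lin-Q}.

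Write $t_j=\lfloor\widetilde\mu^{-1}(\eta_{j-1})\rfloor+1$ for $j\in\N$. The sequence $(t_j)$ is nondecreasing and tends to infinity. We have $Q^{\mu,\eta}_j=\Phi_A(t_{j+1},t_j)$ and $P^\eta_j=P_{t_j}$. By Lemma~\ref{lem:scale}, $\eta_{j-1}\le\mu_{t_j}\le\theta\eta_{j-1}$ and $\mu_{t_{j+1}}/\mu_{t_j}\le\theta^2$.

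\textbf{Stable direction.} First treat $k\ge t_1$ and $m\ge k$ (the finitely many $k<t_1$ are handled by an extra constant, using the ordinary dichotomy to reach $t_1$). Choose $j$ with $t_j\le k<t_{j+1}$ and $i$ with $t_i\le m<t_{i+1}$. If $i=j$, then $\mu_m/\mu_k\le\theta^2$. In that case (od\ref{def:od4}) gives $\|\Phi_A(m,k)P_k\|\le K\mu_k^c$, and since $\mu_m/\mu_k\le\theta^2$ this is bounded by $K\theta^{2\nu}(\mu_m/\mu_k)^{-\nu}\mu_k^c$. If $i>j$, factor
\[
\Phi_A(m,k)P_k=\Phi_A(m,t_i)P_{t_i}\,\Phi_{Q^{\mu,\eta}}(i,j+1)P^\eta_{j+1}\,\Phi_A(t_{j+1},k)P_k .
\]
This uses~\eqref{eq:compatibility-evol-family}. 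Here $\mu_m/\mu_{t_i}\le\theta^2$ and $\mu_{t_{j+1}}/\mu_k\le\theta^2$ by Lemma~\ref{lem:scale}\eqref{lem:scale3}. The outer factors are then bounded by (od\ref{def:od4}) with $K\mu_{t_i}^c$ and $K\mu_k^c$, and the middle factor by $N(\eta_i/\eta_{j+1})^{-\nu}\eta_{j+1}^\ep$. Next convert $\eta$ to $\mu$. We have $\eta_{j+1}\le\theta^2\eta_{j-1}\le\theta^2\mu_{t_j}\le\theta^2\mu_k$, and likewise $\eta_i\ge\eta_{i-1}\ge\mu_{t_i}/\theta\ge\mu_m/\theta^3$. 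Combining these,
\[
\|\Phi_A(m,k)P_k\|\le C\,\mu_{t_i}^c\mu_k^c\Big(\frac{\mu_m}{\mu_k}\Big)^{-\nu}\mu_k^{\ep}.
\]

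\textbf{Absorbing the extra factor.} The step I expect to be the main obstacle is removing the factor $\mu_{t_i}^c$, which sits at the *final* time rather than the initial one. The fix is to write $\mu_{t_i}^c\le\mu_m^c=(\mu_m/\mu_k)^c\mu_k^c$. This trades $c$ from the exponent $\nu$ into the nonuniform part, giving
\[
\|\Phi_A(m,k)P_k\|\le C(\mu_m/\mu_k)^{-(\nu-c)}\mu_k^{\ep+2c}.
\]
This is positive decay precisely because $\nu>c$, and it matches the claimed exponents $(\nu-c,\ep+2c)$. The intra-block case $i=j$ is covered by the same bound.

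\textbf{Unstable direction.} This is symmetric, for $m\le k$ with $t_i\le m<t_{i+1}$ and $t_j\le k<t_{j+1}$. If $i<j$, factor
\[
\Phi_A(m,k)(\Id-P_k)=\Phi_A(m,t_{i+1})(\Id-P_{t_{i+1}})\,\Phi_{Q^{\mu,\eta}}(i+1,j)(\Id-P^\eta_j)\,\Phi_A(t_j,k)(\Id-P_k),
\]
where all operators are the inverses of the restrictions to the kernels, as allowed by (od\ref{def:od3}). The outer factors are bounded by (od\ref{def:od5}) with $K\mu_{t_{i+1}}^c$ and $K\mu_k^c$, and the middle factor by $N(\eta_j/\eta_{i+1})^{-\nu}\eta_j^\ep$. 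The same conversions give $\eta_j\lesssim\mu_k$, $\eta_{i+1}\gtrsim\mu_m$ and $\mu_{t_{i+1}}^c\le\theta^{2c}\mu_m^c\le\theta^{2c}\mu_k^c$. This time the extra factor is absorbed directly into $\mu_k^{2c}$, and it again yields exponents $(\nu-c,\ep+2c)$, indeed even $(\nu,\ep+2c)$. The properties (od\ref{def:od1}) and (od\ref{def:od3}) are assumed, so the proof is complete once the constants are collected.
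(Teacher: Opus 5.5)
Your argument is correct and is essentially the paper's proof: you use the same factorization through the grid points $t_j=\lfloor\widetilde\mu^{-1}(\eta_{j-1})\rfloor+1$, the ordinary dichotomy at scale $\theta^2$ on the two end pieces and on the intra-block case, the $\eta$-dichotomy of \eqref{eq:Lin-Q} on the middle piece, and the same absorption $\mu_{t_i}^c\le\mu_m^c=(\mu_m/\mu_k)^c\mu_k^c$ that costs $c$ in the stable decay exponent. The only slip is in the unstable direction, where bounding $(\eta_j/\eta_{i+1})^{-\nu}$ requires $\eta_{i+1}\lesssim\mu_m$ and $\eta_j\gtrsim\mu_k$, not the inequality $\eta_{i+1}\gtrsim\mu_m$ you list; both needed inequalities hold by Lemma~\ref{lem:scale}(i), e.g.\ $\eta_{i+1}\le\theta^2\eta_{i-1}\le\theta^2\mu_{t_i}\le\theta^2\mu_m$ and $\eta_j\ge\mu_{t_{j+1}}/\theta\ge\mu_k/\theta$.
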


\begin{proof}
Assume that~\eqref{eq:Lin} admits a nonuniform $\mu$-ordinary dichotomy 
relative to a sequence of projections $\{P_k\}_{k \in \mathbb{N}}$ with exponent $c$ at scale $M:=\theta^2$: for some $K \geq 1$  we have, for all  $m \ge k$ with $\mu_m/\mu_k \le M$,
\begin{equation}\label{eq:ordinary-dich1-iii->i}
\left\|\Phi_A(m, k) P_k \right\|\leq K\mu_k^c
\end{equation}
and, for all $m \le k$ with $\mu_k/\mu_m\le M$,
\begin{equation}\label{eq:ordinary-dich2-iii->i}
\left\|\Phi_A(m, k) (\Id-P_k)\right\| \leq K\mu_k^c. 
\end{equation}

Assume also that system~\eqref{eq:Lin-Q} admits a nonuniform $\eta$-dichotomy 
relative to the projections $\left\{P^\eta_k\right\}_{k \in \mathbb{N}}$: there are $N \geq 1, \nu>0$ and  $\ep \ge 0$ such that, for all $m \ge k$, we have
\begin{equation}\label{eq:eta-dich1-iii->i}
\left\|\Phi_{Q^{\mu,\eta}}(m,k) P_k^\eta \right\| \leq N \left(\frac{\eta_m}{\eta_k}\right)^{-\nu}\eta_k^\ep
\end{equation}
and for $m\le k$,
\begin{equation}\label{eq:eta-dich2-iii->i}
\left\|\Phi_{Q^{\mu,\eta}}(m,k) (\Id-P_k^\eta)\right\| \leq N \left(\frac{\eta_k}{\eta_m}\right)^{-\nu}\eta_k^\ep.
\end{equation}
We claim that there exists $N_1\ge 1$ such that  for $m \ge k \ge 1$, we have
\begin{equation}\label{eq:claim1}
	\left\|\Phi_A(m, k) P_k \right\|
	\le N_1 \left(\frac{\mu_{m}}{\mu_{k}}\right)^{-\nu+c} \mu_k^{\ep+2c}.
\end{equation}
We split the proof of~\eqref{eq:claim1} into two cases:\\
\underline{Case I} - There are $n,j \in \N_0$ with $n>j$ such that 
$$\lfloor\widetilde\mu^{-1}(\eta_{n+1})\rfloor +1> m \ge \lfloor\widetilde\mu^{-1}(\eta_n)\rfloor +1 \ge \lfloor\widetilde\mu^{-1}(\eta_{j+1})\rfloor +1 >  k
\ge \lfloor\widetilde\mu^{-1}(\eta_j)\rfloor+1.$$
 Recalling that $P_{\lfloor {\widetilde \mu}^{-1} (\eta_{j+1}) \rfloor +1}=P_{j+2}^\eta$ 
, by~\eqref{eq:bound-rates-1-b},~\eqref{eq:ordinary-dich1-iii->i},~\eqref{eq:eta-dich1-iii->i} and~\eqref{lem:scale3} in Lemma~\ref{lem:scale}, we get 
\[
\begin{split}
& \left\|\Phi_A(m, k) P_k \right\| \\
	& =\left\|
	\Phi_A\left(m, \lfloor\widetilde\mu^{-1}(\eta_n)\rfloor +1\right) P_{\lfloor \widetilde{\mu}^{-1}(\eta_n)\rfloor+1} \Phi_A\left(\lfloor\widetilde\mu^{-1}(\eta_n)\rfloor +1, k\right) P_k \right\| \\
	& \le K \mu_{\lfloor\widetilde\mu^{-1}(\eta_n)\rfloor +1}^c \left\| \Phi_A\left(\lfloor\widetilde\mu^{-1}(\eta_n)\rfloor +1, \lfloor\widetilde\mu^{-1}(\eta_{j+1})\rfloor +1\right)
	\Phi_A\left(\lfloor\widetilde\mu^{-1}(\eta_{j+1})\rfloor +1,k\right) P_k \right\|\\
&\le K\mu_m^c\left\|\Phi_{Q^{\mu,\eta}}\left(n+1,j+2\right)P_{j+2}^\eta \Phi_A\left(\lfloor\widetilde\mu^{-1}(\eta_{j+1})\rfloor+1,k\right) P_k \right\| \\
	& \leq NK  \mu_m^c\left(\frac{\eta_{n+1}}{\eta_{j+2}}\right)^{-\nu}
	\eta_{j+2}^\ep \, \left\|\Phi_A\left(\lfloor\widetilde\mu^{-1}(\eta_{j+1})\rfloor+1,k\right) P_k\right\| \\
	& \leq NK^2   \left(\frac{\mu_m}{\mu_k}\right)^c\left(\frac{\eta_{n+1}}{\eta_{j+2}}\right)^{-\nu}
	\eta_{j+2}^\ep\mu_k^{2c}\\
	& \leq NK^2 \theta^{3\nu+2\ep}  \left(\frac{\mu_m}{\mu_k}\right)^c\left(\frac{\mu_m}{\mu_k}\right)^{-\nu}
	\mu_k^\ep\mu_k^{2c}\\
	& \leq NK^2 \theta^{3\nu+2\ep} \left(\frac{\mu_m}{\mu_k}\right)^{-\nu+c} \mu_k^{\ep+2c},
\end{split}
\]
as 
\[
\begin{split}
\frac{\eta_{n+1}}{\eta_{j+2}}=\frac{\eta_{n+1}}{\mu_m}\cdot \frac{\mu_m}{\mu_k}\cdot \frac{\mu_k}{\eta_{j+2}} &\ge \frac{\eta_{n+1}}{\mu_{\lfloor \widetilde{\mu}^{-1}(\eta_{n+1})\rfloor+1}}\cdot \frac{\mu_m}{\mu_k}\cdot \frac{\mu_{\lfloor \widetilde{\mu}^{-1}(\eta_j)\rfloor+1}}{\eta_{j+2}}\\
&\ge \frac{\eta_{n+1}}{\mu_{\lfloor \widetilde{\mu}^{-1}(\eta_{n+1})\rfloor+1}}\cdot \frac{\mu_m}{\mu_k}\cdot \frac{\eta_j}{\eta_{j+2}}\\
&\ge \theta^{-3}\frac{\mu_m}{\mu_k}.
\end{split}
\]

\underline{Case II} - There is $n \in \N_0$ such that $$\lfloor\widetilde\mu^{-1}(\eta_{n+1})\rfloor +1> m \ge k
\ge \lfloor\widetilde\mu^{-1}(\eta_n)\rfloor+1.$$
By~\eqref{eq:ordinary-dich1-iii->i} and~\eqref{lem:scale2} in Lemma~\ref{lem:scale}, we get 
\[
\begin{split}
	\left\|\Phi_A(m, k) P_k \right\|	\le K\mu_k^c 
	&=K\mu_k^c\left (\frac{\mu_m}{\mu_k}\right )^{-\nu} \left (\frac{\mu_m}{\mu_k}\right )^{\nu}  \\
&\le K\mu_k^c\left (\frac{\mu_m}{\mu_k}\right )^{-\nu} \left (\frac{\mu_{\lfloor\tilde\mu^{-1}(\eta_{n+1})\rfloor+1}}{\mu_{\lfloor\tilde\mu^{-1}(\eta_n)\rfloor+1}}\right )^{\nu}\\
& \le K \theta^{2\nu}
\left(\frac{\mu_{m}}{\mu_{k}}\right)^{-\nu}\mu_k^c.
\end{split}
\]
We conclude that~\eqref{eq:claim1} holds and the claim follows. 

Our next claim is that there exists $N_2\ge 1$ such that for $1\le m\le k$, we have
\begin{equation}\label{eq:claim2}
	\left\|\Phi_A(m, k) (\Id-P_k) \right\|
	\leq N_2 \left(\frac{\mu_{k}}{\mu_{m}}\right)^{-\nu}   \mu_k^{\ep+2c}.
\end{equation}
We again split the proof of~\eqref{eq:claim2} into two cases.

\underline{Case I} - There are $n, j\in \N_0$ with $n<j$ such that 
\[
\lfloor \tilde \mu^{-1}(\eta_n)\rfloor +1\le m < \lfloor \tilde \mu^{-1}(\eta_{n+1})\rfloor +1 \le  \lfloor \tilde \mu^{-1}(\eta_{j})\rfloor +1\le k< \lfloor \tilde{\mu}^{-1}(\eta_{j+1})\rfloor +1.
\]
By~\eqref{eq:bound-rates-1-b},~\eqref{eq:ordinary-dich2-iii->i},~\eqref{eq:eta-dich2-iii->i} and~\eqref{lem:scale3} in Lemma~\ref{lem:scale} we have that 
\[
\begin{split}
& \quad \left\|\Phi_A(m, k) (\Id-P_k)\right\|\\ 
	& =\left\|\Phi_A\left(m, \lfloor\widetilde\mu^{-1}(\eta_{n+1})\rfloor+1\right) (\Id- P_{\lfloor\widetilde\mu^{-1}(\eta_{n+1})\rfloor+1})\Phi_A\left(\lfloor\widetilde\mu^{-1}(\eta_{n+1})\rfloor +1, k\right) (\Id-P_k)\right\|\\
 &\le K\mu_{\lfloor\widetilde\mu^{-1}(\eta_{n+1})\rfloor+1}^c \left \|\Phi_A\left(\lfloor\widetilde\mu^{-1}(\eta_{n+1})\rfloor+1, \lfloor\widetilde\mu^{-1}(\eta_{j})\rfloor +1\right)
	\Phi_A\left(\lfloor\widetilde\mu^{-1}(\eta_{j})\rfloor+1,k\right) (\Id-P_k)\right \|\\
&\le K^2\theta^{2c}\mu_k^c\mu_m^c \|\Phi_A\left(\lfloor\widetilde\mu^{-1}(\eta_{n+1})\rfloor+1, \lfloor\widetilde\mu^{-1}(\eta_{j})\rfloor +1\right)(\Id-P_{\lfloor\widetilde\mu^{-1}(\eta_{j})\rfloor +1})\|\\
&\le K^2\theta^{2c}\mu_k^c\mu_m^c \|\Phi_{Q^{\mu,\eta}}\left(n+2,j+1\right)(\Id-P_{j+1}^\eta)\|\\
& \le NK^2\theta^{2c}\mu_k^{2c}\left(\frac{\mu_m}{\mu_k}\right)^c \left(\frac{\eta_{j+1}}{\eta_{n+2}}\right)^{-\nu}\theta^\ep\mu_k^\ep\\
&\le NK^2 \theta^{2c+3\nu+\ep}\left (\frac{\mu_k}{\mu_m}\right )^{-(\nu+c)}\mu_k^{\ep+2c},
\end{split} 
\]
as \[\frac{\eta_{j+1}}{\mu_k}\le \frac{\eta_{j+1}}{\mu_{\lfloor \widetilde{\mu}^{-1}(\eta_j)\rfloor+1}}\le \frac{\eta_{j+1}}{\eta_j}\le \theta, \]
and
\[
\frac{\eta_{j+1}}{\eta_{n+2}}=\frac{\eta_{j+1}}{\mu_k}\cdot  \frac{\mu_m}{\eta_{n+2}}\cdot \frac{\mu_k}{\mu_m}\ge \frac{\eta_{j+1}}{\mu_{\lfloor \widetilde{\mu}^{-1}(\eta_{j+1})\rfloor+1}}\cdot \frac{\mu_{\lfloor \widetilde{\mu}^{-1}(\eta_n)\rfloor+1}}{\eta_{n+2}}\cdot \frac{\mu_k}{\mu_m}\ge \theta^{-1}\frac{\eta_n}{\eta_{n+2}}\cdot \frac{\mu_k}{\mu_m}\ge \theta^{-3}\frac{\mu_k}{\mu_m}.
\]

\underline{Case II} - There is $n\in \N_0$ such that
\[
\lfloor \tilde{\mu}^{-1}(\eta_n)\rfloor +1 \le m\le k<\lfloor \tilde{\mu}^{-1}(\eta_{n+1})\rfloor+1.
\]
By~~\eqref{lem:scale2} in Lemma~\ref{lem:scale}
and~\eqref{eq:ordinary-dich2-iii->i} we have that 
\[
\begin{split}
\left\|\Phi_A(m, k) (\Id-P_k)\right\| \le K\mu_k^c &=K\left (\frac{\mu_k}{\mu_m}\right )^{-\nu} \left (\frac{\mu_k}{\mu_m}\right )^{\nu} \mu_k^c \\
&\le K\left (\frac{\mu_k}{\mu_m}\right )^{-\nu} \left (\frac{\mu_{\lfloor\tilde\mu^{-1}(\eta_{n+1})\rfloor+1}}{\mu_{\lfloor\tilde\mu^{-1}(\eta_n)\rfloor+1}}\right )^{\nu} \mu_k^c \\
&\le K\theta^{2\nu} \left (\frac{\mu_k}{\mu_m}\right )^{-\nu}\mu_k^c.
\end{split}
\]
We conclude that~\eqref{eq:claim2} holds and the claim follows.

We conclude that system~\eqref{eq:Lin} admits a $\mu$-nonuniform dichotomy 
relative to the projections $\{P_k\}_{k \in \N}$ with exponents $(\nu-c,\ep+2c)$. The proof of the theorem is completed.
\end{proof}

Before giving conditions under which the ordinary dichotomy assumption in Theorem~\ref{teo:main} follows automatically, we present a simple example showing that this theorem applies in a nonuniform setting in which no uniform exponential dichotomy exists. More precisely, the system below admits the required local ordinary dichotomy and a nonuniform exponential dichotomy, but it does not admit any uniform exponential dichotomy.

\begin{example}\label{ex:discrete}
We equip $\R^2$ with the maximum norm. Define a sequence $(b_n)_{n\in\mathbb N_0}$ by
\begin{equation}\label{eq:discrete-b}
 b_n=
 \begin{cases}
  n-3\cdot 2^{j-1}, & 2^j\leq n\leq 3\cdot 2^{j-1}\text{ for some }j\in\N,\\[1mm]
  0, & \text{otherwise}.
 \end{cases}
\end{equation}
Note that the intervals in~\eqref{eq:discrete-b} are pairwise disjoint. Consider the system
\begin{equation}\label{eq:discrete-system}
 x_{n+1}=A_nx_n,
 \qquad
 A_n=
 \begin{pmatrix}
  \e^{-1+b_{n+1}-b_n} & 0\\
  0 & \e^{1-b_{n+1}+b_n}
 \end{pmatrix}.
\end{equation}

The evolution operators associated with~\eqref{eq:discrete-system} are
\begin{equation}\label{eq:discrete-evolution}
 \Phi_A(m,k)=
 \begin{pmatrix}
  \e^{-(m-k)+b_m-b_k} & 0\\
  0 & \e^{m-k-b_m+b_k}
 \end{pmatrix},
 \qquad m,k\in\mathbb N_0.
\end{equation}
For $m<k$, the expression in~\eqref{eq:discrete-evolution} denotes the inverse of $\Phi_A(k,m)$.  Let
\[
 P=
 \begin{pmatrix}
  1&0\\
  0&0
 \end{pmatrix}.
\]
Clearly, $A_nP=PA_n$ for every $n$, and the restriction of $A_n$ to $\ker P$ is an isomorphism.

We highlight two elementary properties of $(b_n)$.  Namely,
\begin{equation}\label{eq:discrete-bounds}
 -\frac n2\leq b_n\leq0 \qquad\text{and}\qquad b_{n+1}-b_n\leq1 \quad\text{for every } n\in\mathbb N_0.
\end{equation}
Indeed, if $n=2^j+r$ with $0\leq r\leq 2^{j-1}$, then
$$b_n=-\frac{2^j}{2}+r\geq-\frac{2^j}{2}\ge -\frac n2.$$
When $b_n=0$, we immediately have $b_n\ge -n/2$.  To obtain the second inequality in~\eqref{eq:discrete-bounds}, we note the following: when $m$ and $m+1$ belong to the same interval, $[2\cdot 2^{j-1},3\cdot 2^{j-1}]$, we have $b_{m+1}-b_m=1$; in all the remaining cases $b_{m+1}-b_m\le 0$.

It follows from~\eqref{eq:discrete-bounds} that
\begin{equation}\label{eq:discrete-one-sided-lipschitz}
 b_m-b_k\leq m-k \qquad\text{whenever }m\geq k.
\end{equation}
Consequently, for $m\geq k$,
$$\|\Phi_A(m,k)P\|=\e^{-(m-k)+b_m-b_k}\leq1.$$
For $m\leq k$, applying~\eqref{eq:discrete-one-sided-lipschitz} to the pair $(k,m)$ gives
$$\|\Phi_A(m,k)(\Id-P)\| =\e^{-(k-m)+b_k-b_m}\leq1.$$
Thus system~\eqref{eq:discrete-system} admits a nonuniform exponential ordinary dichotomy with $c=0$ and $K=1$. In fact, these estimates hold without any restriction on
$|m-k|$, so the scale $M>1$ can be chosen arbitrarily.

If $m\geq k$, we have $b_m\leq0$ and $b_k\geq-k/2$, and hence 
\begin{equation}\label{eq:discrete-stable}
 \|\Phi_A(m,k)P\|
 \leq \e^{-(m-k)}\e^{k/2}.
\end{equation}
If $m\leq k$, then $b_k\leq0$ and $b_m\geq-m/2$, and therefore
\begin{equation}\label{eq:discrete-unstable}
 \begin{split}
 \|\Phi_A(m,k)(\Id-P)\|
 &=\e^{-(k-m)-b_m+b_k}\\
 &\leq \e^{-(k-m)}\e^{m/2}
 \leq \e^{-(k-m)}\e^{k/2}.
 \end{split}
\end{equation}
Equations~\eqref{eq:discrete-stable} and~\eqref{eq:discrete-unstable} prove that system~\eqref{eq:discrete-system} admits a nonuniform exponential dichotomy with
$N=1$, $\nu=1$, and $\varepsilon=1/2$.  Notice in particular that $\varepsilon<\nu$.

We claim that system~\eqref{eq:discrete-system} does not admit a uniform exponential dichotomy, even if a different sequence of projections is allowed. In fact, for
$$2^j\leq n<3\cdot 2^{j-1}$$
we have $b_{n+1}-b_n=1$, and thus $A_n=\Id$.  Hence
\begin{equation}\label{eq:discrete-identity-block}
 \Phi_A(3\cdot 2^{j-1},2^j)=\Id, \quad \text{with} \quad
 3\cdot2^{j-1}-2^j=2^{j-1}\longrightarrow+\infty.
\end{equation}

Suppose, to the contrary, that the system admitted a uniform exponential dichotomy with invariant projections $(R_n)$ and constants $D\geq1$ and $\alpha>0$.  Invariance on the identity block gives $R_{3\cdot 2^{j-1}}=R_{2^j}$. The stable and unstable estimates, applied to the two endpoints of this
block, would imply
$$\|R_{2^j}\|\leq D\e^{-\alpha 2^{j-1}} \qquad\text{and}\qquad \|\Id-R_{2^j}\|=\|\Id-R_{3\cdot 2^{j-1}}\|\leq D\e^{-\alpha 2^{j-1}}.$$
Therefore,
$$1=\|\Id\| \leq \|R_{2^j}\|+\|\Id-R_{2^j}\| \leq2D\e^{-\alpha 2^{j-1}},$$
which is impossible as $j\to\infty$.  This proves the claim.

Finally, for $\mu_n=\eta_n=e^n$, one has $\theta=e$ and $Q_n^{\mu,\eta}=A_n$. The ordinary dichotomy obtained above holds at every scale and hence, in particular, at the scale $\theta^2=e^2$. Moreover, $\nu=1>0=c$. Thus, all the hypotheses of Theorem~\ref{teo:main} are satisfied.
\end{example}

In the following result we formulate appropriate assumptions under which we can remove the assumptions regarding the existence of nonuniform $\mu$-ordinary dichotomy in Theorem~\ref{teo:main}.
\begin{theorem}\label{theomain3}
Let $\theta\ge 1$ and $\mu, \eta$ be growth rates satisfying~\eqref{eq:bound-rates-1-b}.
Suppose that $A_n$ is invertible for each $n\in \mathbb N$ and that there exist $C, \omega>0$ and $\delta \ge 0$ such that
\begin{equation}\label{bgrow}
\|\Phi_A(m, k)\| \le C\left (\frac{\mu_m}{\mu_k}\right )^\omega \, \mu_k^\delta  \quad \text{for $m\ge k$,}
\end{equation}
and
\begin{equation}\label{bdec}
\|\Phi_A(m, k)\| \le C\left (\frac{\mu_k}{\mu_m}\right )^\omega \, \mu_k^\delta  \quad \text{for $m\le k$.}
\end{equation}
Furthermore, assume that system~\eqref{eq:Lin-Q} admits a nonuniform $\eta$-dichotomy 
with exponents $(\nu,\ep)$ such that $\nu>3\delta+\varepsilon$.
Then~\eqref{eq:Lin} admits a nonuniform $\mu$-dichotomy with exponents $(\nu-\ep-3\delta, 6\delta+3\ep)$.
\end{theorem}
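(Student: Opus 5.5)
The plan is to reduce the statement to Theorem~\ref{teo:main}. To do so I would build a sequence of projections $\{P_k\}_{k\in\N}$ for~\eqref{eq:Lin} whose associated sequence $\{P^\eta_k\}$ is exactly the dichotomy projections of~\eqref{eq:Lin-Q}. I would then check that $\{P_k\}$ gives a nonuniform $\mu$-ordinary dichotomy at scale $\theta^2$ with exponent $c=\varepsilon+3\delta$. Since $\nu>c$, Theorem~\ref{teo:main} then gives a $\mu$-dichotomy with exponents $(\nu-c,\varepsilon+2c)=(\nu-\varepsilon-3\delta,\,6\delta+3\varepsilon)$, which is the claim.

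\emph{Construction of $P_k$.} Write $j_i:=\lfloor\widetilde\mu^{-1}(\eta_{i-1})\rfloor+1$ for the nodes, and let $R_i$ be the projections of the $\eta$-dichotomy of~\eqref{eq:Lin-Q}. Because every $A_n$ is invertible, $\Phi_A(m,k)$ is defined for all $m,k$. For $k\ge j_1$, pick $i$ with $j_i\le k<j_{i+1}$ and set $P_k:=\Phi_A(k,j_i)R_i\Phi_A(j_i,k)$. For $k<j_1$, use the same formula with $i=1$ (backward transport).

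Consecutive nodes may coincide. In that case $Q^{\mu,\eta}_i=\Id$, and the invariance $Q^{\mu,\eta}_iR_i=R_{i+1}Q^{\mu,\eta}_i$ forces $R_i=R_{i+1}$, so the definition is consistent. The same invariance gives $P_{j_i}=R_i$, i.e.\ $P^\eta_i=R_i$. The identity $A_kP_k=P_{k+1}A_k$ holds by construction, including when passing across a node, again by the invariance of $R_i$ under $Q^{\mu,\eta}$. Condition (od\ref{def:od3}) is automatic because $A_k$ is invertible and maps $\Ker P_k$ onto $\Ker P_{k+1}$.

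\emph{Ordinary dichotomy estimates.} First, putting $m=k$ in the $\eta$-dichotomy gives $\|R_i\|\le N\eta_i^\varepsilon$ and $\|\Id-R_i\|\le N\eta_i^\varepsilon$. By Lemma~\ref{lem:scale}, $\eta_i\le\theta\eta_{i-1}\le\theta\mu_{j_i}$, so both norms are at most $N\theta^\varepsilon\mu_{j_i}^\varepsilon$.

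Now take $m\ge k$ with $\mu_m/\mu_k\le\theta^2$, and let $j=j_i\le k$ be the node preceding $k$. Write
\[
\Phi_A(m,k)P_k=\Phi_A(m,j)R_i\Phi_A(j,k).
\]
The first factor is bounded by~\eqref{bgrow}: by Lemma~\ref{lem:scale}\eqref{lem:scale3}, $\mu_m/\mu_j\le\theta^4$, so $\|\Phi_A(m,j)\|\le C\theta^{4\omega}\mu_j^\delta$. The last factor is bounded by~\eqref{bdec}: $\|\Phi_A(j,k)\|\le C\theta^{2\omega}\mu_k^\delta$. Using $\mu_j\le\mu_k$, the product is bounded by a constant times $\mu_k^{2\delta+\varepsilon}\le\mu_k^{3\delta+\varepsilon}$.

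The unstable estimate is symmetric: for $m\le k$ with $\mu_k/\mu_m\le\theta^2$, write $\Phi_A(m,k)(\Id-P_k)=\Phi_A(m,j)(\Id-R_i)\Phi_A(j,k)$. Here the node $j$ may lie below $m$, and then $\Phi_A(m,j)$ is controlled by~\eqref{bgrow} with base $\mu_j$. Taking $c=3\delta+\varepsilon$ leaves room for one extra factor $\mu^\delta$ in either configuration.

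\emph{Main obstacle.} The delicate part is the bookkeeping of the ratios $\mu_m/\mu_j$ and $\mu_k/\mu_j$. Both must stay bounded by powers of $\theta$, using Lemma~\ref{lem:scale} together with the scale restriction $\theta^2$; this must hold even when $m$ and $k$ lie in different node intervals, where $m$ may pass a node. The indices $k<j_1$ also need care: there $\mu_k$ and $\mu_{j_1}$ are comparable with constants depending only on $\theta$ and $\eta_0$, so they only change the constant $K$. Once these ratio bounds are in place, Theorem~\ref{teo:main} applies directly.
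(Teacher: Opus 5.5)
Your proposal is correct and follows the paper's proof: you transport the projections as $P_k=\Phi_A(k,j_i)R_i\Phi_A(j_i,k)$, identify $P^\eta_i=R_i$, establish a $\mu$-ordinary dichotomy at scale $\theta^2$ with exponent $c=3\delta+\ep$, and then invoke Theorem~\ref{teo:main}. The paper bounds $\|P_k\|$ by a constant times $\mu_k^{2\delta+\ep}$ and multiplies by $\|\Phi_A(m,k)\|$, whereas your direct factorization $\Phi_A(m,j_i)R_i\Phi_A(j_i,k)$ actually gives exponent $2\delta+\ep$, which you then weaken to $3\delta+\ep$; also, the case $k<j_1$ you flag is vacuous, since $j_1=\lfloor\widetilde\mu^{-1}(\eta_0)\rfloor+1=1$.
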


\begin{proof}
Since~\eqref{eq:Lin-Q} admits a nonuniform $\eta$-dichotomy with exponents $(\nu, \ep)$ and relative to a sequence of projections $\{\tilde P_k\}_{k\in \mathbb N}$, there exists $N\ge 1$ such that 
\begin{equation}\label{959-01}
\left\|\Phi_{Q^{\mu,\eta}}(m,k) \tilde P_k\right\| \leq N \left(\frac{\eta_m}{\eta_k}\right)^{-\nu}\eta_k^\ep \quad \text{for $m\ge k$,}
\end{equation}
and
\[
\left\|\Phi_{Q^{\mu,\eta}}(m,k) (\Id-\tilde P_k)\right\| \leq N \left(\frac{\eta_k}{\eta_m}\right)^{-\nu}\eta_k^\ep \quad \text{for $m\le k$.}
\]
From~\eqref{959-01} applied to $m=k$ we conclude that 
\begin{equation}\label{334}
\|\tilde P_k\| \le N\eta_k^\varepsilon, \quad \text{for $k\in \mathbb N$.}
\end{equation}
We now construct a new sequence of projections $\{P_k\}_{k\in \mathbb N}$ in the following manner: fix $k\in \N$ and choose  $j\in \mathbb N$ such that 
\[
\lfloor \widetilde{\mu}^{-1}(\eta_{j})\rfloor+1>k\ge \lfloor \widetilde{\mu}^{-1}(\eta_{j-1})\rfloor+1.
\]
Set 
\[
P_k:=\Phi_A (k, \lfloor \widetilde{\mu}^{-1}(\eta_{j-1})\rfloor+1)  \tilde P_j\Phi_A(\lfloor \widetilde{\mu}^{-1}(\eta_{j-1})\rfloor+1, k).
\]
It is easy to verify that~(od1) holds. Indeed, if 
\[
\lfloor \widetilde{\mu}^{-1}(\eta_{j})\rfloor+1>k+1\ge \lfloor \widetilde{\mu}^{-1}(\eta_{j-1})\rfloor+1,
\]
we have  \[
\begin{split}
A_kP_k&=A_k\Phi_A (k, \lfloor \widetilde{\mu}^{-1}(\eta_{j-1})\rfloor+1)  \tilde P_j\Phi_A(\lfloor \widetilde{\mu}^{-1}(\eta_{j-1})\rfloor+1, k) \\
&=\Phi_A (k+1, \lfloor \widetilde{\mu}^{-1}(\eta_{j-1})\rfloor+1)  \tilde P_j\Phi_A(\lfloor \widetilde{\mu}^{-1}(\eta_{j-1})\rfloor+1, k) \\
&=\Phi_A (k+1, \lfloor \widetilde{\mu}^{-1}(\eta_{j-1})\rfloor+1)  \tilde P_j\Phi_A(\lfloor \widetilde{\mu}^{-1}(\eta_{j-1})\rfloor+1, k+1)A_k  \\
&=P_{k+1}A_k.
\end{split}
\]
Assume now that $k+1=\lfloor \widetilde{\mu}^{-1}(\eta_{j})\rfloor+1,
$
and choose  $m$ such that 
\[
\lfloor \widetilde{\mu}^{-1}(\eta_{m})\rfloor+1>k+1\ge \lfloor \widetilde{\mu}^{-1}(\eta_{m-1})\rfloor+1,
\]
Then we have 
\[
\begin{split}
P_{k+1}&= \Phi_A (k+1, \lfloor \widetilde{\mu}^{-1}(\eta_{m-1})\rfloor+1)  \tilde P_m\Phi_A(\lfloor \widetilde{\mu}^{-1}(\eta_{m-1})\rfloor+1, k+1) \\
&=\Phi_{Q^{\mu, \eta}}(j+1, m)\tilde P_m\Phi_{Q^{\mu, \eta}}(m, j+1)\\
&=\tilde P_{j+1}.
\end{split}
\]
Therefore,
\[
A_kP_k=\Phi_{Q^{\mu, \eta}}(j+1, j)\tilde P_j\Phi_{Q^{\mu, \eta}}(j, j+1)A_k=\tilde P_{j+1}A_k=P_{k+1}A_k,
\]
yielding~(od1). Note that due to invertibility, (od2) is trivially satisfied.

By Lemma~\ref{lem:scale}(ii), \eqref{eq:bound-rates-1-b}, \eqref{bgrow}, \eqref{bdec} and~\eqref{334} we have 
\begin{equation}\label{341}
\|P_k\| \le C^2N \theta^{4\omega+\ep} 
\mu_k^{2\delta+\ep}, 
\end{equation}
for $k\in \mathbb N$. 

Next, we note that~\eqref{bgrow} and~\eqref{341} imply that 
\[
\|\Phi_A(m, k)P_k\| \le \|\Phi_A(m, k)\|\cdot \|P_k\| \le K\mu_k^{3\delta+\varepsilon},
\]
for $m\ge k$ such that $\mu_m/\mu_k \le \theta^2$, where $K>0$ is independent of $m$ and $k$. 
Similar estimate can be established along the complementary direction. We conclude that~\eqref{eq:Lin} admits a nonuniform $\mu$-ordinary dichotomy with exponent $c=3\delta+\varepsilon$ and scale $\theta^2$ relative to the sequence of projections $\{P_k\}_{k\in \mathbb N}$.  Observe that 
\[
P_k^\eta=\tilde P_k, \quad k\in \N.
\]
The conclusion of the theorem now follows directly from Theorem~\ref{teo:main}.
\end{proof}

\section{The case of continuous time}
We recall that $\{T(t, s)\}_{t\ge s \ge 0}\subset \mathcal B(X)$ is said to be an \emph{evolution family} if the following holds:
\begin{itemize}
\item $T(t, t)=\Id$ for $t\ge 0$;
\item $T(t, s)T(s, r)=T(t, r)$ for $t\ge s\ge r \ge 0$;
\item for $v\in X$ and $s\ge 0$, the map $t \mapsto T(t, s)v$ is continuous on $[s, \infty)$.
\end{itemize}
A (continuous-time)  growth rate is an arbitrary continuous bijective map $\mu \colon [0, \infty)\to [1, \infty)$ such that $\mu(0)=1$.

\begin{definition}
    Let  $\{T(t, s)\}_{t\ge s \ge 0}\subset \mathcal B(X)$  be an evolution family and $\mu \colon [0, \infty)\to [1, \infty)$ a growth rate. We say that $\{T(t, s)\}_{t\ge s \ge 0}$ admits a nonuniform $\mu$-dichotomy with exponents $(\nu, \ep)$ where $\nu>0$ and $\ep\ge 0$ if there is a constant $N\ge 1$ and a family of projections $\{P(t)\}_{t\ge 0}\subset \mathcal B(X)$ such that:
\begin{enumerate}[($\mu c$1)]
    \item\label{def:muc1} 
$T(t, s)P(s)=P(t)T(t, s)$ for $t\ge s\ge 0$;
	\item\label{def:muc2} 
for every $t\ge s\ge 0$, $T(t, s)_{\mid}\colon \Ker P(s)\to \Ker P(t)$ is an isomorphism, where $T(t, s)_{\mid}$ denotes the restriction of $T(t, s)$ to $\Ker P(s)$;
	\item\label{def:muc3} 
$\|T(t, s)P(s)\| \le N\left (\frac{\mu(t)}{\mu(s)}\right )^{-\nu}\mu(s)^\ep$ for $t\ge s$;
	\item\label{def:muc4} 
$\|T(t, s)(\Id-P(s))\|\le N\left (\frac{\mu(s)}{\mu(t)}\right )^{-\nu}\mu(s)^\ep$ for $t\le s$, where $T(t, s)$ denotes the inverse of the restriction of $T(s, t)$ to $\Ker P(t)$.
    \end{enumerate}

\end{definition}

Given an evolution family $\{T(t, s)\}_{t\ge s\ge 0}$ and two growth rates $\mu, \eta \colon [0, \infty)\to [1, \infty)$, set
\begin{equation}\label{newef}
T^{\mu, \eta}(t, s):=T(\mu^{-1}(\eta(t)), \mu^{-1}(\eta(s))), \quad t\ge s\ge 0.
\end{equation}
By arguing as in the proof of~\cite[Proposition 1]{DD} one can easily show that $\{T^{\mu, \eta}(t, s)\}_{t\ge s \ge 0}$ is an evolution family. Observe that 
\begin{equation}\label{inverse}
T(t, s)=T^{\mu, \eta}(\eta^{-1}(\mu(t)), \eta^{-1}(\mu(s))), \quad t\ge s\ge 0.
\end{equation}

The proof of the following result is similar to the proof of~\cite[Proposition 2]{DD}.
\begin{theorem}\label{thmc}
Let $\{T(t, s)\}_{t\ge s\ge 0}$ be an evolution family and $\mu, \eta \colon [0, \infty)\to [1, \infty)$ growth rates. The following statements are equivalent:
\begin{enumerate}
\item $\{T(t, s)\}_{t\ge s\ge 0}$ admits a nonuniform $\mu$-dichotomy with exponents $(\nu, \ep)$;
\item $\{T^{\mu, \eta}(t, s)\}_{t\ge s \ge 0}$ admits a nonuniform $\eta$-dichotomy with exponents $(\nu, \ep)$.
\end{enumerate}
\end{theorem}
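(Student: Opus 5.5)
We plan to prove the equivalence by a direct change of variables. The key observation is that, in continuous time, the map $\phi:=\mu^{-1}\circ\eta\colon[0,\infty)\to[0,\infty)$ is a continuous increasing bijection, since both $\mu$ and $\eta$ are continuous bijections onto $[1,\infty)$ with $\mu(0)=\eta(0)=1$. By \eqref{newef} and \eqref{inverse}, the families $T$ and $T^{\mu,\eta}$ are conjugated by this time reparametrization. Moreover, under it we have $\mu(\phi(t))=\eta(t)$ exactly. Because of this identity, no rounding constants like the $\theta$ of Lemma~\ref{lem:scale} appear, and the exponents $(\nu,\ep)$ are preserved exactly.

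For (1)$\Rightarrow$(2), we assume $\{T(t,s)\}$ admits a nonuniform $\mu$-dichotomy with projections $P(t)$ and constant $N$, and define $P^{\mu,\eta}(t):=P(\phi(t))$.
\begin{itemize}
\item Property ($\mu c$\ref{def:muc1}) for $T^{\mu,\eta}$ follows from ($\mu c$\ref{def:muc1}) for $T$ applied at the times $\phi(t)\ge\phi(s)$, using that $\phi$ is increasing.
\item Property ($\mu c$\ref{def:muc2}) follows in the same way: $T^{\mu,\eta}(t,s)_\mid$ is exactly $T(\phi(t),\phi(s))_\mid$ acting from $\Ker P(\phi(s))$ onto $\Ker P(\phi(t))$.
\item For $t\le s$, the inverse-restriction operator $T^{\mu,\eta}(t,s)$ on $\Ker P^{\mu,\eta}(t)$ coincides with $T(\phi(t),\phi(s))$ in the sense of ($\mu c$\ref{def:muc4}). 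This follows by uniqueness of inverses of the restricted isomorphisms.
\item For the estimates with $t\ge s$ we write
\[
\|T^{\mu,\eta}(t,s)P^{\mu,\eta}(s)\|=\|T(\phi(t),\phi(s))P(\phi(s))\|\le N\Bigl(\frac{\mu(\phi(t))}{\mu(\phi(s))}\Bigr)^{-\nu}\mu(\phi(s))^{\ep}=N\Bigl(\frac{\eta(t)}{\eta(s)}\Bigr)^{-\nu}\eta(s)^{\ep}.
\]
The unstable estimate is identical, with $t\le s$.
\end{itemize}

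For (2)$\Rightarrow$(1), we argue symmetrically. By \eqref{inverse}, $T$ is obtained from $T^{\mu,\eta}$ by the reparametrization $\phi^{-1}=\eta^{-1}\circ\mu$. We define $P(t):=\widetilde P(\phi^{-1}(t))$, where $\widetilde P$ are the projections of the $\eta$-dichotomy of $T^{\mu,\eta}$. Alternatively, we can note that $T=(T^{\mu,\eta})^{\eta,\mu}$ and apply the first implication with the roles of $\mu$ and $\eta$ swapped. This requires checking that $T^{\mu,\eta}(\eta^{-1}(\mu(t)),\eta^{-1}(\mu(s)))$ equals $T(t,s)$, which is just \eqref{inverse}.

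The only mildly delicate point is bookkeeping rather than estimation. We must verify that the backward operators in ($\mu c$\ref{def:muc4}) transform correctly, and that $T^{\mu,\eta}$ is genuinely an evolution family; for the latter we cite the argument of \cite[Proposition 1]{DD}. Continuity of $t\mapsto T^{\mu,\eta}(t,s)v$ follows from the continuity of $\phi$. We expect no real obstacle beyond this.
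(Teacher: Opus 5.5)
Your proposal is correct and follows essentially the same route as the paper. You pull back the projections via $P(\mu^{-1}(\eta(t)))$ (respectively $\tilde P(\eta^{-1}(\mu(t)))$), transfer invariance and kernel invertibility, and use the identity $\mu(\mu^{-1}(\eta(t)))=\eta(t)$ to carry the estimates over with the same $N$, $\nu$, $\varepsilon$. Your observation that $T=(T^{\mu,\eta})^{\eta,\mu}$ lets the converse follow from the forward implication is only a harmless shortcut for the paper's repeated computation.
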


\begin{proof}
$i.\implies ii.$ Suppose that  $\{T(t, s)\}_{t\ge s\ge 0}$ admits a nonuniform $\mu$-dichotomy with exponents $(\nu, \ep)$ relative to a family of projections $\{P(t)\}_{t\ge 0}$. Set
\[
\tilde P(t):=P(\mu^{-1}(\eta(t))), \quad t\ge 0.
\]
Then,
\[
\begin{split}
\tilde P(t)T^{\mu, \eta}(t, s) &=P(\mu^{-1}(\eta(t)))T(\mu^{-1}(\eta(t)), \mu^{-1}(\eta(s))) \\
&=T(\mu^{-1}(\eta(t)), \mu^{-1}(\eta(s)))P(\mu^{-1}(\eta(s)))\\
&=T^{\mu, \eta}(t, s)\tilde P(s),
\end{split}
\]
for $t\ge s\ge 0$. Moreover, for such $t$ and $s$, the restriction of $T^{\mu, \eta}(t, s)$ to $\Ker \tilde P(s)$ coincides with the restriction of $T(\mu^{-1}(\eta(t)), \mu^{-1}(\eta(s)))$ to $\Ker P(\mu^{-1}(\eta(s)))$ which is invertible.

Furthermore, for $t\ge s\ge 0$ we have 
\[
\begin{split}
\|T^{\mu, \eta}(t, s)\tilde P(s)\| &=\|T(\mu^{-1}(\eta(t)), \mu^{-1}(\eta(s)))P(\mu^{-1}(\eta(s)))\| \\
&\le N \left (\frac{\mu(\mu^{-1}(\eta(t)))}{\mu(\mu^{-1}(\eta(s)))}\right )^{-\nu}\mu(\mu^{-1}(\eta(s)))^\ep \\
&=N\left (\frac{\eta(t)}{\eta(s)}\right )^{-\nu}\eta(s)^\ep.
\end{split}
\]
Similarly, for $0\le t\le s$ we have 
\[
\begin{split}
\|T^{\mu, \eta}(t, s)(\Id-\tilde P(s))\| &=\|T(\mu^{-1}(\eta(t)), \mu^{-1}(\eta(s)))(\Id-P(\mu^{-1}(\eta(s))))\| \\
&\le N \left (\frac{\mu(\mu^{-1}(\eta(s)))}{\mu(\mu^{-1}(\eta(t)))}\right )^{-\nu}\mu(\mu^{-1}(\eta(s)))^\ep \\
&=N\left (\frac{\eta(s)}{\eta(t)}\right )^{-\nu}\eta(s)^\ep.
\end{split}
\]
We conclude that $ii.$ holds.
 
$ii. \implies i.$ Suppose that  $\{T^{\mu, \eta}(t, s)\}_{t\ge s\ge 0}$ admits a nonuniform $\eta$-dichotomy with exponents $(\nu, \ep)$ relative to a family of projections $\{\tilde P(t)\}_{t\ge 0}$. Set
\[
P(t):=\tilde P(\eta^{-1}(\mu(t))), \quad t\ge 0.
\]
Then (see~\eqref{inverse}) we have that for $t\ge s\ge 0$,
\[
\begin{split}
P(t)T(t, s) &=\tilde P(\eta^{-1}(\mu(t)))T^{\mu, \eta}(\eta^{-1}(\mu(t)), \eta^{-1}(\mu(s))) \\
&=T^{\mu, \eta}(\eta^{-1}(\mu(t)), \eta^{-1}(\mu(s)))\tilde P(\eta^{-1}(\mu(s))) \\
&=T(t, s)P(s).
\end{split}
\]
Moreover, the restriction of $T(t, s)$ to $\Ker P(s)$ which coincides with the restriction of $T^ {\mu, \eta}(\eta^{-1}(\mu(t)), \eta^{-1}(\mu(s)))$ to $\Ker \tilde P(\eta^{-1}(\mu(s)))$ 
 is invertible. In addition, for $t\ge s\ge 0$,
 \[
 \begin{split}
\|T(t, s)P(s)\| &=\|T^{\mu, \eta}(\eta^{-1}(\mu(t)), \eta^{-1}(\mu(s)))\tilde P(\eta^{-1}(\mu(s)))\| \\
&\le N\left (\frac{\eta (\eta^{-1}(\mu(t)))}{\eta (\eta^{-1}(\mu(s)))}\right )^{-\nu}\eta(\eta^{-1}(\mu(s)))^\ep \\
&=N\left (\frac{\mu(t)}{\mu(s)}\right )^{-\nu}\mu(s)^\ep.
 \end{split}
 \]
 Similarly, for $0\le t\le s$ we have 
 \[
 \|T(t, s)(\Id-P(s))\|\le N\left (\frac{\mu(s)}{\mu(t)}\right )^{-\nu}\mu(s)^\ep.
 \]
 We conclude that $i.$ holds and the proof of the theorem is completed.
\end{proof}

\begin{remark}\label{remc1}
So far, we have considered evolution families over $\mathbb R^+$. However, one can easily adapt the arguments to deal with evolution families  over $\mathbb R$. In this case, a (continuous-time) growth rate is an arbitrary continuous and strictly increasing map $\mu \colon \mathbb R \to (0,+\infty)$ such that $\mu(0)=1$, $\lim_{t\to \infty}\mu(t)=\infty$ and $\lim_{t\to -\infty}\mu(t)=0$. Given two such growth rates $\mu, \eta\colon \mathbb R\to (0,+\infty)$ and an evolution family $\{T(t, s)\}_{t\ge s }$, one can consider an evolution family $\{T^{\mu, \eta}(t, s)\}_{t\ge s}$ as in~\eqref{newef} (but for $t, s\in \mathbb R$, $t\ge s$).  One can now establish the version of Theorem~\ref{thmc} for evolution families on $\mathbb R$ whose proof would be identical to the proof of Theorem~\ref{thmc}.
\end{remark}

The equivalence established in Theorem~\ref{thmc} can be used to produce genuinely nonuniform dichotomies associated with nonexponential growth rates. The following example, obtained from a nonuniform exponential dichotomy through the time change $\tau=\ln(1+t)$, exhibits a nonuniform polynomial dichotomy that cannot be replaced by a uniform one.

\begin{example}\label{ex:nonuniform-polynomial-continuous}
Let $X=\mathbb R^2$, endowed with the maximum norm, and consider the growth rates
\[
\mu(t)=1+t
\qquad\text{and}\qquad
\eta(\tau)=e^\tau,
\]
and define $b\colon[0,+\infty)\to\mathbb R$ by
\begin{equation}\label{eq:continuous-polynomial-b}
b(\tau)=
\begin{cases}
-4^{j-1}\tau+4^{2j-1},	& 4^j\leq\tau\leq 4^j+1 \text{ for some }j\in\mathbb N\\[1mm]
-5\cdot 4^{j-1}+\tau-1, & 4^j+1\leq\tau\leq 5\cdot 4^{j-1}+1 \text{ for some }j\in\mathbb N\\[1mm]
0, &\text{otherwise}
\end{cases}.
\end{equation}
The blocks $[4^j,5\cdot4^{j-1}+1]$, $j\in\mathbb N$, are pairwise disjoint, and the expressions $-4^{j-1}\tau+4^{2j-1}$ and $-5\cdot 4^{j-1}+\tau-1$ in \eqref{eq:continuous-polynomial-b} agree at $\tau=4^j+1$. Consider the differential equation
\begin{equation}\label{eq:continuous-polynomial-system}
x'=A(t)x,
\qquad
A(t)=\frac{1}{1+t}
\begin{pmatrix}
-1+b'(\ln(1+t)) & 0\\
0 & 1-b'(\ln(1+t))
\end{pmatrix},
\end{equation}
for $t\geq0$, where the equality defining $A(t)$ is understood almost everywhere. The evolution family associated with equation~\eqref{eq:continuous-polynomial-system} is
\begin{equation}\label{eq:continuous-polynomial-evolution}
T(t,s)=
\begin{pmatrix}
\displaystyle
\frac{1+s}{1+t} \
e^{b(\ln(1+t))-b(\ln(1+s))}&0\\[4mm]
0&\displaystyle
\frac{1+t}{1+s} \
e^{-b(\ln(1+t))+b(\ln(1+s))}
\end{pmatrix},
 \ \ t\geq s\geq0.
\end{equation}
Let
\[
P=
\begin{pmatrix}
1&0\\
0&0
\end{pmatrix}.
\]
Clearly, $P$ is invariant and the evolution on $\ker P$ is invertible. Note that
\begin{equation}\label{eq:continuous-polynomial-bounds}
-\frac{\tau}{4}\leq b(\tau)\leq0
\quad\text{for every }\tau\geq0, \qquad b'(\tau)\leq1
\quad\text{for almost every }\tau\geq0.
\end{equation}
Indeed, on the interval $[4^j,4^j+1]$ the function decreases linearly from $0$ to $-4^{j-1}$, while on the interval $[4^j+1,5\cdot4^{j-1}+1]$ it increases linearly from $-4^{j-1}$ to $0$. The lower bound in \eqref{eq:continuous-polynomial-bounds} follows immediately on both subintervals.

If $t\geq s$, then $b(\ln(1+t))\leq0$ and
$b(\ln(s+1))\geq-\frac14\ln(s+1)$. Consequently,
$$\|T(t,s)P\| =\frac{1+s}{1+t}
e^{b(\ln(1+t))-b(\ln(s+1))} \leq
\left(\frac{1+t}{1+s}\right)^{-1}(1+s)^{1/4} =\left(\frac{\mu(t)}{\mu(s)}\right)^{-1}\mu(s)^{1/4}.
$$
For $0\leq t\leq s$, using the inverse of the evolution on $\ker P$, we similarly obtain
\[
\begin{split}
\|T(t,s)(\Id-P)\|
&= \frac{1+t}{1+s} \
e^{-b(\ln(1+t))+b(\ln(s+1))} \leq
\left(\frac{1+s}{1+t}\right)^{-1}(1+t)^{1/4}\\
&\leq \left(\frac{\mu(s)}{\mu(t)}\right)^{-1}\mu(s)^{1/4}.
\end{split}
\]
Hence, equation~\eqref{eq:continuous-polynomial-system} admits a nonuniform $\mu$-dichotomy with $N=1$, $\nu=1$, and $\varepsilon=1/4$.
	
We next describe the time rescaling explicitly. Since $\mu^{-1}(e^\tau)=e^\tau-1$, it follows from \eqref{eq:continuous-polynomial-evolution} that
\begin{equation}\label{eq:continuous-polynomial-rescaled}
\begin{split}
T^{\mu,\eta}(\tau,\sigma)
&=T(e^\tau-1,e^\sigma-1)\\
&=
\begin{pmatrix}
e^{-(\tau-\sigma)+b(\tau)-b(\sigma)} & 0\\
0 & e^{\tau-\sigma-b(\tau)+b(\sigma)}
\end{pmatrix},
\qquad \tau\geq\sigma\geq0.
\end{split}
\end{equation}
The estimates already established show that this family admits a nonuniform exponential dichotomy with exponents
$$\nu=1 \qquad\text{and}\qquad \varepsilon=\frac14.$$
	
We will now show that the nonuniformity cannot be removed. Set
$$ s_j=e^{4^j+1}-1 \qquad\text{and}\qquad t_j=e^{5\cdot 4^{j-1}+1}-1. $$
For $t\in[s_j,t_j]$, we have
\[
\ln(1+t)\in[4^j+1,5\cdot 4^{j-1}+1]
\]
and thus $b'(\ln(1+t))=1$ almost everywhere. Hence, $A(t)=0$ almost everywhere on $[s_j,t_j]$ and
\begin{equation}\label{eq:continuous-polynomial-identity}
T(t_j,s_j)=\Id.
\end{equation}
Moreover,
\begin{equation}\label{eq:continuous-polynomial-ratio}
\frac{\mu(t_j)}{\mu(s_j)}=\frac{1+t_j}{1+s_j}=e^{4^{j-1}}\longrightarrow+\infty.
\end{equation}
	
Assume, for contradiction, that a uniform dichotomy with respect to $\mu$ exists. Let $(R(t))_{t\geq0}$ be the associated invariant family of projections, and let $D\geq1$ and $\alpha>0$ be the dichotomy constants. Invariance and \eqref{eq:continuous-polynomial-identity} imply that $R(t_j)=R(s_j)$. Applying the stable and unstable estimates to the endpoints of $[s_j,t_j]$ and using \eqref{eq:continuous-polynomial-ratio}, we obtain
\[
\|R(s_j)\|\leq De^{-\alpha 4^{j-1}}
\qquad\text{and}\qquad
\|\Id-R(t_j)\|\leq De^{-\alpha 4^{j-1}}.
\]
Therefore,
\[
1=\|\Id\|
\leq\|R(s_j)\|+\|\Id-R(s_j)\|
\leq2De^{-\alpha 4^{j-1}},
\]
which is impossible as $j\to\infty$. Thus, no uniform $\mu$-dichotomy exists.
	
This example provides a concrete
illustration of Theorem~\ref{thmc}: the nonuniform polynomial dichotomy of
the original system becomes a nonuniform exponential dichotomy after the
time rescaling $\tau=\ln(1+t)$. The example also shows that the nonuniformity
cannot, in general, be removed.
\end{example}

\subsection{Robustness of general dichotomies}
Throughout this subsection, we will fix an evolution family $\{T(t, s)\}_{t\ge s\ge 0}\subset \mathcal B(X)$ and a continuous map $B\colon [0, \infty)\to \mathcal B(X)$. Furthermore, we will assume that there is an evolution family $\{T_B(t, s)\}_{t\ge s\ge 0}\subset \mathcal B(X)$ such that 
\begin{equation}\label{348}
T_B(t, s)=T(t, s)+\int_s^t T(t, r)B(r)T_B(r, s)\, dr\quad \text{for $t\ge s\ge 0$.}
\end{equation}
\begin{theorem}\label{robustness}
Let $\mu \colon [0, \infty)\to [1, \infty)$ be a continuously differentiable growth rate such that $\mu'(t)>0$ for each $t\in [0, \infty)$.
Suppose the following:
\begin{enumerate}
\item $\{T(t, s)\}_{t\ge s\ge 0}$ admits a nonuniform $\mu$-dichotomy with exponents $(\nu, \ep)$;
\item there is $c>0$ such that 
\begin{equation}\label{smallpert}
\|B(t)\| \le \frac{c\mu'(t)}{\mu(t)^{1+\ep}}, \quad t\ge 0.
\end{equation}
\end{enumerate}
Then, provided that $c$ is sufficiently small, $\{T_B(t, s)\}_{t\ge s\ge 0}$ admits a nonuniform $\mu$-dichotomy.
\end{theorem}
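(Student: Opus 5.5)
The plan is to reduce the statement to the robustness of nonuniform exponential dichotomies via Theorem~\ref{thmc} with $\eta(t)=e^t$. Let $\phi:=\mu^{-1}\circ\eta$, so $\phi(\tau)=\mu^{-1}(e^\tau)$. Since $\mu$ is $C^1$ with $\mu'>0$, $\phi$ is a $C^1$ diffeomorphism of $[0,\infty)$ with
\[
\phi'(\tau)=\frac{e^\tau}{\mu'(\phi(\tau))}=\frac{\mu(\phi(\tau))}{\mu'(\phi(\tau))}.
\]
By Theorem~\ref{thmc}, hypothesis (1) gives that $T^{\mu,\eta}(\tau,\sigma)=T(\phi(\tau),\phi(\sigma))$ admits a nonuniform exponential dichotomy with exponents $(\nu,\ep)$.

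Next, I would show that $T_B^{\mu,\eta}$ is the perturbation of $T^{\mu,\eta}$ by
\[
\tilde B(\tau):=\phi'(\tau)B(\phi(\tau)).
\]
Write \eqref{348} at $t=\phi(\tau)$, $s=\phi(\sigma)$ and substitute $r=\phi(\rho)$, $dr=\phi'(\rho)\,d\rho$. This gives
\[
T_B^{\mu,\eta}(\tau,\sigma)=T^{\mu,\eta}(\tau,\sigma)+\int_\sigma^\tau T^{\mu,\eta}(\tau,\rho)\tilde B(\rho)T_B^{\mu,\eta}(\rho,\sigma)\,d\rho .
\]
The map $\tilde B$ is continuous because $\phi$ is $C^1$. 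Hypothesis \eqref{smallpert} then yields
\[
\|\tilde B(\tau)\|\le \frac{\mu(\phi(\tau))}{\mu'(\phi(\tau))}\cdot\frac{c\,\mu'(\phi(\tau))}{\mu(\phi(\tau))^{1+\ep}}=c\,e^{-\ep\tau}.
\]
This is exactly the standard smallness condition in the robustness theorem for nonuniform exponential dichotomies with exponents $(\nu,\ep)$: perturbations bounded by $c e^{-\ep\tau}$ with $c$ small, in the Barreira--Valls type result for evolution families on Banach spaces given by a variation of constants formula.

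I would then invoke that known exponential result, stated in this integral-equation form and valid for arbitrary Banach spaces and evolution families. It gives that for $c$ sufficiently small, depending on $N,\nu,\ep$, the family $T_B^{\mu,\eta}$ admits a nonuniform exponential dichotomy, possibly with slightly worse exponents. Applying Theorem~\ref{thmc} in the direction ii.$\Rightarrow$i. to the evolution family $T_B$ then gives a nonuniform $\mu$-dichotomy for $T_B$, since $(T_B)^{\mu,\eta}=T_B^{\mu,\eta}$.

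The main obstacle I expect is matching the exact hypotheses of the exponential robustness theorem being cited. Some versions assume an invertible evolution family, or a perturbation bound of the form $\delta e^{-\ep|\tau|}$ with a specific relation between the exponents. Others require $T_B$ to be defined via the integral equation in the strong sense rather than in operator norm. I would first check that the result holds for the noninvertible setting with condition ($\mu c$2); if needed, I would adapt it, or note that the integral equation holds pointwise after the substitution. A minor point is that the change of variables in the Bochner or strong integral is legitimate. This follows because $\phi$ is a $C^1$ increasing bijection and the integrands are strongly continuous.
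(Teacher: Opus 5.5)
Your proposal is correct and is essentially the paper's proof. You rescale with $\eta(t)=e^t$, rewrite \eqref{348} as a variation-of-constants formula for $T_B^{\mu,\eta}$ with $\tilde B(\tau)=\frac{e^\tau}{\mu'(\mu^{-1}(e^\tau))}B(\mu^{-1}(e^\tau))$ satisfying $\|\tilde B(\tau)\|\le ce^{-\ep\tau}$, apply the robustness result for nonuniform exponential dichotomies, and transfer back with Theorem~\ref{thmc}; for the exponential step, which you left open, the paper cites \cite[Theorem 3]{BDV} with $I=\{0\}$.
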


\begin{proof}
Let $\eta(t)=e^t$, $t\ge 0$.
By~\eqref{348} we have that
\begin{equation}\label{pertubation}
\begin{split}
&T_B^{\mu, \eta}(t, s)\\
&=T_B(\mu^{-1}(e^t), \mu^{-1}(e^s))\\
&=T(\mu^{-1}(e^t), \mu^{-1}(e^s))
+\int_{\mu^{-1}(e^s)}^{\mu^{-1}(e^t)}T(\mu^{-1}(e^t), r)B(r)T_B(r, \mu^{-1}(e^s))\, dr \\
&=T^{\mu, \eta}(t, s)+\int_s^t T(\mu^{-1}(e^t), \mu^{-1}(e^z))B(\mu^{-1}(e^z))T_B(\mu^{-1}(e^z), \mu^{-1}(e^s)) \frac{e^z}{\mu'(\mu^{-1}(e^z))}\, dz \\
&=T^{\mu, \eta}(t, s)+\int_s^t T^{\mu, \eta}(t, z) B(\mu^{-1}(e^z))T_B^{\mu, \eta}(z, s)\frac{e^z}{\mu'(\mu^{-1}(e^z))} \, dz \\
&=T^{\mu, \eta}(t, s)+\int_s^t T^{\mu, \eta}(t, z) \tilde B(z) T_B^{\mu, \eta}(z, s)\,  dz, \\
\end{split}
\end{equation}
for $t\ge s\ge 0$,
where
\[
\tilde B(z):=\frac{e^z}{\mu'(\mu^{-1}(e^z))} B(\mu^{-1}(e^z)), \quad z\ge 0.
\]
Note that~\eqref{smallpert} gives that
\begin{equation}\label{smallB2}
\|\tilde B(z)\|  \le ce^{-\ep z}, \quad z\ge 0.
\end{equation}
By Theorem~\ref{thmc}, the evolution family $\{T^{\mu, \eta}(t, s)\}_{t\ge s\ge 0}$ admits a nonuniform exponential dichotomy with exponents $(\nu, \ep$). Now, it follows from~\eqref{pertubation}, \eqref{smallB2} and~\cite[Theorem 3]{BDV} (applied to the case when $I=\{0\}$) that the evolution family $\{T_B^{\mu, \eta}(t, s)\}_{t\ge s\ge 0}$ admits a nonuniform exponential dichotomy, provided that $c$ is sufficiently small. Hence, it follows from Theorem~\ref{thmc} that $\{T_B(t, s)\}_{t\ge s\ge 0}$ admits  a nonuniform $\mu$-dichotomy, which completes the proof of the theorem. 
\end{proof}

\begin{remark}
One can easily establish the version of Theorem~\ref{robustness} for evolution families over $\mathbb R$ (see the discussion in Remark~\ref{remc1}). A result of this type has been proved in~\cite[Theorem 1]{BV}. In order to compare the two results, we write $\mu$ in the form  $\mu(t)=e^{\rho(t)}$.  Now~\eqref{smallpert} reduces to 
\begin{equation}\label{824pert}
\|B(t)\| \le \frac{c\rho'(t)}{e^{\varepsilon \rho(t)}}, \quad t\ge 0.
\end{equation}
In comparison, \cite[Theorem 1]{BV} (adjusted to the case of evolution families on $\mathbb R^+$) requires that 
\begin{equation}\label{838pert}
\|B(t)\| \le \frac{c\rho'(t)}{e^{3\varepsilon \rho(t)}}, \quad t\ge 0,
\end{equation}
and in addition that $\{T(t, s)\}$ admits a nonuniform $\mu$-dichotomy with exponents $(\nu, \ep)$ that satisfy $2\ep<\nu$. 

Clearly, \eqref{824pert} is weaker than~\eqref{838pert}. In addition, in Theorem~\ref{robustness} we make no assumptions regarding the constants associated with the dichotomy of $\{T(t, s)\}$. Finally, our proof of Theorem~\ref{robustness} is completely different from  the proof of~\cite[Theorem 1]{BV}, which makes no use of the relationship between exponential dichotomies and dichotomies with growth rates.
\end{remark}

\subsection{Sacker-Sell spectrum}
Throughout this subsection we will consider an evolution family $\{T(t, s)\}_{t\ge s\ge 0}$ associated with an ordinary differential equation
\[
x'=A(t)x, \quad t\ge 0,
\]
where $A\colon [0, \infty)\to \mathcal M_d$ is a locally integrable map and $\mathcal M_d$ denotes the space of all real matrices of order $d$.

In addition, we fix a continuously differentiable growth rate $\mu \colon [0, \infty)\to [1, \infty)$ such that $\mu'(t)>0$ for each $t$. For $\lambda \in \mathbb R$ we consider the equation
\begin{equation}\label{947}
x'=\left (A(t)-\lambda \frac{\mu'(t)}{\mu(t)}\Id \right )x, \quad t\ge 0.
\end{equation}
Note that the evolution family associated to~\eqref{947} is given by 
\[
T_\lambda (t, s)=\left (\frac{\mu(t)}{\mu(s)}\right )^{-\lambda}T(t, s).
\]
By $\Sigma_{A, \mu}^{ND}$ we denote the set of all $\lambda \in \mathbb R$ such that $\{T_\lambda (t, s)\}_{t\ge s \ge 0}$ does not admit a nonuniform $\mu$-dichotomy with exponents $(\nu, \ep)$ with $\ep<\nu$.

Finally, for $\lambda \in \mathbb R$ we also consider the equation
\begin{equation}\label{956}
x'=(\tilde A(t)-\lambda \Id)x, \quad t\ge 0,
\end{equation}
where \[\tilde A(t):=\frac{e^t}{\mu'(\mu^{-1}(e^t))}A(\mu^{-1}(e^t)), \quad t\ge 0.\] We note that the evolution family corresponding to~\eqref{956} is given by 
\[
\tilde T_\lambda (t, s)=e^{-\lambda (t-s)}T(\mu^{-1}(e^t), \mu^{-1}(e^s)).
\]
Let $\Sigma_{\tilde A, exp}^{ND}$ denote the set of all $\lambda \in \mathbb R$ such that $\{\tilde T_\lambda (t,s)\}_{t\ge s\ge 0}$ does not admit a nonuniform exponential dichotomy with exponents $(\nu, \ep)$ with $\ep<\nu$.

The following result is a direct consequence of Theorem~\ref{thmc} and a simple observation that 
\[
T_\lambda(\mu^{-1}(e^t), \mu^{-1}(e^s))=\tilde T_\lambda(t, s).
\]
\begin{corollary}
    We have that $\Sigma_{A, \mu}^{ND}=\Sigma_{\tilde A, exp}^{ND}$.
\end{corollary}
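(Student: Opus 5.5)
The plan is to prove the two inclusions of $\Sigma_{A, \mu}^{ND}=\Sigma_{\tilde A, exp}^{ND}$ through their complements. Fix $\lambda\in\mathbb R$. We show that $\{T_\lambda(t,s)\}_{t\ge s\ge 0}$ admits a nonuniform $\mu$-dichotomy with exponents $(\nu,\ep)$, $\ep<\nu$, if and only if $\{\tilde T_\lambda(t,s)\}_{t\ge s\ge 0}$ admits a nonuniform exponential dichotomy with the \emph{same} exponents $(\nu,\ep)$. Since the condition $\ep<\nu$ refers only to the exponents, it is then automatically preserved, and the equality of the two sets follows.

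First, I would apply Theorem~\ref{thmc} to the evolution family $\{T_\lambda(t,s)\}_{t\ge s\ge 0}$ with the growth rates $\mu$ and $\eta(t)=e^t$. Note that $\eta$ is a continuous bijection $[0,\infty)\to[1,\infty)$ with $\eta(0)=1$. Also, $\mu$ is a continuously differentiable growth rate with $\mu'>0$, so $\mu^{-1}$ is well defined and continuous. The hypotheses of Theorem~\ref{thmc} therefore hold.

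Next, by \eqref{newef}, the rescaled family is
\[
T_\lambda^{\mu,\eta}(t,s)=T_\lambda(\mu^{-1}(e^t),\mu^{-1}(e^s))=\Bigl(\frac{e^t}{e^s}\Bigr)^{-\lambda}T(\mu^{-1}(e^t),\mu^{-1}(e^s)).
\]
This coincides with $\tilde T_\lambda(t,s)$, which is exactly the observation stated before the corollary.

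It remains to confirm that $\tilde T_\lambda$ really is the evolution family of \eqref{956}. I would argue this by the chain rule. Set $y(t)=x(\mu^{-1}(e^t))$, where $x$ solves $x'=A(t)x$. Then
\[
y'(t)=\frac{e^t}{\mu'(\mu^{-1}(e^t))}A(\mu^{-1}(e^t))\,y(t)=\tilde A(t)\,y(t),
\]
and multiplying by $e^{-\lambda(t-s)}$ accounts for the term $-\lambda\Id$. Because $A$ is only locally integrable, the substitution has to be justified through the integral form, using absolute continuity of the change of variables. This works since $\mu^{-1}\circ\exp$ is $C^1$ with positive derivative. This check is the only step that needs any care; it is the same computation as in \eqref{pertubation}.

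With the identification in hand, Theorem~\ref{thmc} gives: $T_\lambda$ has a nonuniform $\mu$-dichotomy with exponents $(\nu,\ep)$ if and only if $\tilde T_\lambda$ has a nonuniform $\eta$-dichotomy, that is, a nonuniform exponential dichotomy, with exponents $(\nu,\ep)$. Hence $\lambda\notin\Sigma_{A,\mu}^{ND}$ if and only if $\lambda\notin\Sigma_{\tilde A,exp}^{ND}$, which proves the corollary.
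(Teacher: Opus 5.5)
Your proposal is correct and matches the paper's argument: apply Theorem~\ref{thmc} to $\{T_\lambda(t,s)\}_{t\ge s\ge 0}$ with $\eta(t)=e^t$, note that $T_\lambda^{\mu,\eta}=\tilde T_\lambda$ because $\mu(\mu^{-1}(e^t))/\mu(\mu^{-1}(e^s))=e^{t-s}$, and use that Theorem~\ref{thmc} keeps the exponents $(\nu,\ep)$ unchanged, so the constraint $\ep<\nu$ carries over in both directions. Your extra chain-rule check that $\tilde T_\lambda$ is the evolution family of \eqref{956} does no harm but is not needed, since $\Sigma_{\tilde A,exp}^{ND}$ is defined directly in terms of $\tilde T_\lambda$.
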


\begin{remark}
The structure of $\Sigma_{A, \mu}^{ND}$ was studied in~\cite{BDV2, Silva}. We have shown here that this structure can be deduced directly from the structure of the ``classical'' dichotomy spectrum introduced with respect to the notion of a (nonuniform) exponential dichotomy. 
\end{remark}

\subsection{A Sternberg-type linearization theorem}
Throughout this subsection, we will consider a locally integrable $A\colon \mathbb R\to \mathcal M_d$ and the associated equation
\begin{equation}\label{209lde}
x'=A(t)x, \quad t\in \mathbb R.
\end{equation}
Let $\mu \colon \mathbb R\to (0, \infty)$ be a continuously differentiable growth rate such that $\mu'(t)>0$ for each $t\in \mathbb R$ (see Remark~\ref{remc1}). By $\Sigma_{A, \mu}^{UD}$ we will denote the set of all $\lambda \in \mathbb R$ such that  the equation
\[
x'=\left (A(t)-\lambda \frac{\mu'(t)}{\mu(t)}\Id \right )x, \quad t\in \mathbb R
\]
does not admit a uniform $\mu$-dichotomy, i.e. nonuniform $\mu$-dichotomy with exponents $(\nu, 0)$ for some $\nu>0$.
By arguing as in the previous subsection, we have that $\Sigma_{A, \mu}^{UD}=\Sigma_{\tilde A, exp}^{UD}$, 
where \begin{equation}\label{AA}\tilde A(t):=\frac{e^t}{\mu'(\mu^{-1}(e^t))}A(\mu^{-1}(e^t)), \quad t\in \mathbb R.\end{equation}

In the sequel, we will assume that there are $K, a>0$ such that 
\begin{equation}\label{12:22}
\|T(t, s)\| \le K\left (\frac{\mu(t)}{\mu(s)}\right )^a \quad \text{for $t\ge s$,}
\end{equation}
and
\[
\|T(t, s)\| \le K\left (\frac{\mu(s)}{\mu(t)}\right )^a \quad \text{for $t\le s$,}
\]
where $\{T(t, s)\}_{t, s\in \mathbb R}$ is an evolution family associated with~\eqref{209lde}. Let $T^{\mu, exp}(t, s)$ be defined as in~\eqref{newef} with $\eta(t)=e^t$, and for $t, s\in \mathbb R$. The above requirements imply that 
\[
\|T^{\mu, exp}(t, s)\|\le Ke^{a|t-s|}, \quad t, s\in \mathbb R.
\]
Consequently, by~\cite[Theorem 3.1]{Siegmund}
\[
\Sigma:=\Sigma_{A, \mu}^{UD}=\Sigma_{\tilde A, exp}^{UD}=[a_1, b_1]\cup \ldots \cup [a_n, b_n]
\]
for some $1\le n\le d$ and 
\[
a_1\le b_1<\ldots <a_n \le b_n.
\]
In the sequel, we will also consider nonlinear perturbations of~\eqref{209lde} of the form
\begin{equation}\label{326se}
x'=A(t)x+f(t, x), \quad t\in \mathbb R,
\end{equation}
where $f\colon \mathbb R\times \mathbb R^d\to \mathbb R^d$.
By $\{U(t, s)\}_{t, s\in \mathbb R}$ we will denote a (nonlinear) evolution family associated to~\eqref{326se} (we assume that the solutions of~\eqref{326se} are globally defined).

For $k\in \mathbb N$, let $\mathcal O_\mu^k$ denote the set of all $f\colon \mathbb R\times \mathbb R^d \to \mathbb R^d$ which are $C^k$-Carath\' eodory (as in~\cite[p.1280]{CDS}) such that:
\begin{itemize}
\item for $t\in \mathbb R$, $f(t, 0)=0$ and $D_x f(t, 0)=0$;
\item there is $M>0$ such that 
\begin{equation}\label{546db}
\|D_x^jf(t, x)\| \le M\frac{\mu'(t)}{\mu(t)} \quad \text{for a.e. $t\in \mathbb R$, $x\in \mathbb R^d$ and $0\le j \le k$,}
\end{equation}
where $D_x^j f$ denotes the $j$-th derivative of $f$ with respect to the variable $x$.
\end{itemize}
In the particular case when $\mu(t)=e^t$, we will write $\mathcal O^k$ instead of $\mathcal O_\mu^k$.

The following is a version of~\cite[Theorem 5]{CDS} for dichotomies with growth rates.
\begin{theorem}\label{Stern}
Let $\ell \in \mathbb N$. Then there exists $k\in \mathbb N$
with the following property: if $0\notin \Sigma$, $f\in \mathcal O_\mu^{k+2}$ and 
\[
[a_i, b_i]\cap [\langle a, q\rangle, \langle b, q\rangle]=\emptyset \quad \text{for $1\le i \le n$ and $q\in \mathbb N_0^n$ with $2\le |q| \le k$,}
\]
where $a=(a_1, \ldots, a_n)$, $b=(b_1, \ldots, b_n)$, $|q|=\sum_{i=1}^n q_i$, and $\langle \cdot, \cdot \rangle$ denotes the standard scalar product on $\mathbb R^n$, then~\eqref{209lde} and~\eqref{326se} are $C^\ell $ equivalent. Namely, there are $p, \tilde p>0$, $r\in (0, p)$, $\tilde r\in (0, \tilde p)$ and continuous maps $H_1\colon \mathbb R\times B_r(0)\to \mathbb R^d$, $H_2\colon \mathbb R\times B_{\tilde r}(0)\to \mathbb R^d$ such that:
\begin{itemize}
\item $H_i(t, 0)=0$ for $t\in \mathbb R$, $i=1, 2$. Moreover, $H_1(t, \cdot)(B_r(0))\subset B_{\tilde p}(0)$ and $H_2(t, \cdot)(B_{\tilde r}(0))\subset B_p(0)$;
\item for each $t\in \mathbb R$ and $i\in \{1, 2\}$, $H_i(t, \cdot)$ is of class $C^\ell$;
\item $H_1(t, H_2(t, x))=x$ for each $t\in \mathbb R$  and $x\in B_{\tilde r}(0)$ with $H_2(t, x)\in B_r(0)$. Analogously, $H_2(t, H_1(t, x))=x$ for $t\in \mathbb R$ and $x\in B_r(0)$ with $H_1(t, x)\in B_{\tilde r}(0)$;
\item $H_1(t, T(t, s)x)=U(t, s)H_1(s, x)$ for $t, s\in \mathbb R$ and $x\in B_r(0)$ with $T(t, s)x\in B_r(0)$. Analogously, $T(t, s)H_2(s, x)=H_2(t, U(t, s)x)$ for $t, s\in \mathbb R$ and  $x\in B_{\tilde r}(0)$ with $U(t, s)x\in B_{\tilde r}(0)$;
\item $\lim_{x\to 0}H_1(t, x)=\lim_{x\to 0}H_2(t, x)=0$ uniformly in $t$.
\end{itemize}
\end{theorem}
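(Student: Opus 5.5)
The plan is to reduce Theorem~\ref{Stern} to \cite[Theorem 5]{CDS}, which is the exponential version, by means of the time rescaling $t=\mu^{-1}(e^\tau)$ (i.e.\ $\eta(\tau)=e^\tau$). After the reduction we transport the conjugacies back to the original time.

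First, we perform the change of variables in both equations. If $x(t)$ solves~\eqref{326se}, we set $y(\tau):=x(\mu^{-1}(e^\tau))$. Since $\frac{d}{d\tau}\mu^{-1}(e^\tau)=\frac{e^\tau}{\mu'(\mu^{-1}(e^\tau))}$, the function $y$ solves
\[
y'=\tilde A(\tau)y+\tilde f(\tau,y),\qquad \tilde f(\tau,y):=\frac{e^\tau}{\mu'(\mu^{-1}(e^\tau))}f(\mu^{-1}(e^\tau),y),
\]
with $\tilde A$ as in~\eqref{AA}. Correspondingly, the linear evolution family is $T^{\mu,exp}$ and the nonlinear one is $\tilde U(\tau,\sigma)=U(\mu^{-1}(e^\tau),\mu^{-1}(e^\sigma))$. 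The key computation is that, at $t=\mu^{-1}(e^\tau)$, we have $\mu(t)=e^\tau$, so $\frac{e^\tau}{\mu'(t)}\cdot\frac{\mu'(t)}{\mu(t)}=1$. Hence~\eqref{546db} gives $\|D^j_y\tilde f(\tau,y)\|\le M$, and we also have $\tilde f(\tau,0)=0$ and $D_y\tilde f(\tau,0)=0$. The $C^{k+2}$-Carath\'eodory property is preserved: measurability in $\tau$ holds since $\tau\mapsto\mu^{-1}(e^\tau)$ is a $C^1$ diffeomorphism of $\mathbb R$ (using the growth-rate conventions of Remark~\ref{remc1}), and the $x$-regularity is untouched. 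Thus $\tilde f\in\mathcal O^{k+2}$.

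Second, we check the spectral hypotheses. As already noted before the statement, $\|T^{\mu,exp}(\tau,\sigma)\|\le Ke^{a|\tau-\sigma|}$ (bounded growth), and $\Sigma=\Sigma^{UD}_{A,\mu}=\Sigma^{UD}_{\tilde A,exp}$. The latter identity follows from the $\mathbb R$-version of Theorem~\ref{thmc} with $\varepsilon=0$ (Remark~\ref{remc1}), applied to $T_\lambda$ together with the identity $T_\lambda(\mu^{-1}(e^\tau),\mu^{-1}(e^\sigma))=e^{-\lambda(\tau-\sigma)}T^{\mu,exp}(\tau,\sigma)$. Therefore the hypotheses $0\notin\Sigma$ and the nonresonance conditions are exactly those of \cite[Theorem 5]{CDS} for the pair $\tilde A,\tilde f$. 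We take $k$ to be the integer that result provides for the given $\ell$. Applying it yields $p,\tilde p,r,\tilde r$ and maps $\tilde H_1,\tilde H_2$ which conjugate $T^{\mu,exp}$ and $\tilde U$.

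Third, we define $H_i(t,x):=\tilde H_i(\log\mu(t),x)$. Since we only compose with a homeomorphism in the time variable, continuity, the properties $H_i(t,0)=0$, the ball inclusions, $C^\ell$ regularity in $x$, the inverse relations, and the uniform limits as $x\to0$ all transfer verbatim. The conjugacy relation $H_1(t,T(t,s)x)=U(t,s)H_1(s,x)$ follows from~\eqref{inverse}, i.e.\ $T(t,s)=T^{\mu,exp}(\log\mu(t),\log\mu(s))$, and the analogous identity for $U$; the same argument gives the relation for $H_2$.

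The main obstacle is the first step. There we must verify carefully that the rescaled nonlinearity satisfies the precise Carath\'eodory and uniform-derivative hypotheses of $\mathcal O^{k+2}$ in \cite{CDS}. We must also confirm that the rescaled nonlinear flow is indeed the nonlinear evolution family of the transformed equation, including global existence, which transfers through the rescaling. I would first do this check directly from the chain rule in its absolutely continuous form.
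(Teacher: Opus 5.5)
Your proposal is correct and follows essentially the same route as the paper. The paper also sets $g(t,x)=\frac{e^t}{\mu'(\mu^{-1}(e^t))}f(\mu^{-1}(e^t),x)$, checks $g\in\mathcal O^{k+2}$ via the cancellation $\frac{e^t}{\mu'(\mu^{-1}(e^t))}\cdot\frac{\mu'(\mu^{-1}(e^t))}{\mu(\mu^{-1}(e^t))}=1$, applies \cite[Theorem 5]{CDS} to $x'=\tilde A(t)x$ and $x'=\tilde A(t)x+g(t,x)$ using $\Sigma=\Sigma^{UD}_{\tilde A,exp}$, and defines $H_i(t,x)$ as the exponential conjugacies evaluated at $\ln\mu(t)$; your remarks on measurability under the $C^1$ time change and on the rescaled nonlinear flow only make explicit details that the paper leaves implicit.
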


\begin{proof}
Let $k$ be given by~\cite[Theorem 5]{CDS}.
We define $g\colon \mathbb R\times \mathbb R^d \to \mathbb R^d$ by 
\[
g(t, x):=\frac{e^t}{\mu'(\mu^{-1}(e^t))}f(\mu^{-1}(e^t), x), \quad (t, x)\in \mathbb R\times \mathbb R^d.
\]
Clearly, $g(t, 0)=0$ and $D_xg(t, 0)=0$ for each $t\in \mathbb R$. Moreover, \eqref{546db} (with $k+2$ instead of $k$) implies that 
\[
\|D_x^j g(t, x)\|  =\frac{e^t}{\mu'(\mu^{-1}(e^t))}\|D_x^j f(\mu^{-1}(e^t), x)\|\le M\frac{e^t}{\mu'(\mu^{-1}(e^t))}\frac{\mu'(\mu^{-1}(e^t))}{\mu(\mu^{-1}(e^t))}=M,
\]
for a.e. $t\in \mathbb R$, $x\in \mathbb R^d$ and $0\le j\le k+2$. Therefore, $g\in \mathcal O^{k+2}$. It follows from~\cite[Theorem 5]{CDS} that 
\begin{equation}\label{8:13}
x'=\tilde A(t)x, \quad t\in \mathbb R,
\end{equation}
and 
\begin{equation}\label{8:14}
x'=\tilde A(t)x(t)+g(t, x), \quad t\in \mathbb R
\end{equation}
are $C^\ell$ equivalent. Recalling that the evolution  family associated to~\eqref{8:13} is given by $\{T^{\mu, exp}(t, s)\}_{t, s\in \mathbb R}$, and that the evolution family corresponding to~\eqref{8:14} is given by $\{U(\mu^{-1}(e^t), \mu^{-1}(e^s))\}_{t, s\in \mathbb R}$, this means there are $p, \tilde p>0$, $r\in (0, p)$, $\tilde r\in (0, \tilde p)$ and continuous maps $H\colon \mathbb R\times B_r(0)\to \mathbb R^d$, $\tilde H\colon \mathbb R\times B_{\tilde r}(0)\to \mathbb R^d$ such that:
\begin{itemize}
\item $H(t, 0)=0$ and $\tilde H(t, 0)=0$ for $t\in \mathbb R$. Moreover, $H(t, \cdot)(B_r(0))\subset B_{\tilde p}(0)$ and $\tilde H(t, \cdot)(B_{\tilde r}(0))\subset B_p(0)$;
\item for each $t\in \mathbb R$, $H(t, \cdot)$ and $\tilde H(t, \cdot)$ are of class $C^\ell$;
\item $H(t, \tilde H(t, x))=x$ for each $t\in \mathbb R$  and $x\in B_{\tilde r}(0)$ with $\tilde H(t, x)\in B_r(0)$. Analogously, $\tilde H(t, H(t, x))=x$ for $t\in \mathbb R$ and $x\in B_r(0)$ with $H(t, x)\in B_{\tilde r}(0)$;
\item \[H(t, T(\mu^{-1}(e^t), \mu^{-1}(e^s))x)=U(\mu^{-1}(e^t), \mu^{-1}(e^s))H(s, x)\] for $t, s\in \mathbb R$ and $x\in B_r(0)$ with $T^{\mu, exp}(t, s)x\in B_r(0)$. Analogously, \[T(\mu^{-1}(e^t), \mu^{-1}(e^s))\tilde H(s, x)=\tilde H(t, U(\mu^{-1}(e^t), \mu^{-1}(e^s))x) \] for $t, s\in \mathbb R$ and  $x\in B_{\tilde r}(0)$ with $U(\mu^{-1}(e^t), \mu^{-1}(e^s))x\in B_{\tilde r}(0)$;
\item $\lim_{x\to 0}H(t, x)=\lim_{x\to 0}\tilde H(t, x)=0$ uniformly in $t$.
\end{itemize}
For $t\in \mathbb R$ and $x\in B_r(0)$, set $H_1(t, x)=H(\ln \mu(t), x)$. Analogously, for $t\in \mathbb R$ and $x\in B_{\tilde r}(0)$, let $H_2(t,x):=\tilde H(\ln \mu(t), x)$. It is straightforward to verify that $H_i$, $i=1, 2$ satisfy the desired properties. 
\end{proof}

\subsection{Applications to admissibility}
Throughout this subsection we deal with an evolution family $\{T(t, s)\}_{t\ge s}$ over  $\mathbb R$ acting on a Banach space $X=(X, \|\cdot \|)$,  and a continuously differentiable growth rate $\mu \colon \mathbb R\to (0, \infty)$ such that $\mu'(t)>0$ for each $t$.

Let $Y_\infty$ denote the space of continuous maps $x\colon \mathbb R\to X$ such that 
\[
\|x\|_\infty:=\sup_{t\in \mathbb R}\|x(t)\|<\infty.
\]
Then $(Y_\infty, \|\cdot \|_\infty)$ is a Banach space.

\begin{theorem}\label{TM-ADMISS}
Suppose that there exists $K, a>0$ such that~\eqref{12:22} holds. Then
the following properties are equivalent:
\begin{enumerate}
\item[(i)] $\{T(t, s)\}_{t\ge s}$ admits a uniform $\mu$-dichotomy;
\item[(ii)] for each $y\in Y_\infty$, there is a unique $x\in Y_\infty$ such that 
\begin{equation}\label{adm}
x(t)=T(t, s)x(s)+\int_s^t \frac{\mu'(r)}{\mu(r)}T(t, r)y(r)\, dr \quad \text{for $t\ge s$.}
\end{equation}
\end{enumerate}
\end{theorem}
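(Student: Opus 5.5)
The plan is to reduce the statement to the known admissibility characterization of uniform exponential dichotomies for evolution families on $\mathbb R$, via the rescaling with $\eta(t)=e^t$ and Theorem~\ref{thmc} in the form of Remark~\ref{remc1}. Set $T^{\mu,exp}(t,s)=T(\mu^{-1}(e^t),\mu^{-1}(e^s))$. As noted before Theorem~\ref{Stern}, \eqref{12:22} gives $\|T^{\mu,exp}(t,s)\|\le Ke^{a(t-s)}$ for $t\ge s$, so $T^{\mu,exp}$ has bounded exponential growth. By Theorem~\ref{thmc} with $\ep=0$ (the exponents are preserved, so uniformity is preserved), property (i) is equivalent to: $T^{\mu,exp}$ admits a uniform exponential dichotomy on $\mathbb R$. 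I would then invoke the classical Perron-type theorem for evolution families on $\mathbb R$ with bounded growth, in the version where only the forward-in-time variation of constants formula is used and the dichotomy may be non-invertible on the stable part. It states that such an evolution family admits an exponential dichotomy if and only if for each bounded continuous $z$ there is a unique bounded continuous $w$ with
\[
w(\tau)=T^{\mu,exp}(\tau,\sigma)w(\sigma)+\int_\sigma^\tau T^{\mu,exp}(\tau,\rho)z(\rho)\,d\rho,\quad \tau\ge\sigma.
\]

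The remaining step is to show that this admissibility property for $T^{\mu,exp}$ is equivalent to (ii). Given $y\in Y_\infty$, set $z(\rho)=y(\mu^{-1}(e^\rho))$; conversely, given $z$, set $y(r)=z(\ln\mu(r))$. The map $y\mapsto y\circ\mu^{-1}\circ\exp$ is an isometric bijection of $Y_\infty$ onto the corresponding space of bounded continuous functions, because $r\mapsto\ln\mu(r)$ is a homeomorphism of $\mathbb R$. This uses the growth rate conventions of Remark~\ref{remc1}, namely $\mu(\mathbb R)=(0,\infty)$. Under the same correspondence $x\leftrightarrow w$, with $w(\tau)=x(\mu^{-1}(e^\tau))$, I would substitute $r=\mu^{-1}(e^\rho)$, so that $dr=e^\rho/\mu'(\mu^{-1}(e^\rho))\,d\rho$. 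This turns $\frac{\mu'(r)}{\mu(r)}\,dr$ exactly into $d\rho$, as in the computation \eqref{pertubation}. Hence $x$ solves \eqref{adm} with datum $y$ if and only if $w$ solves the exponential admissibility equation with datum $z$. Existence and uniqueness therefore transfer in both directions.

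Combining the two equivalences yields (i)$\iff$(ii). The main obstacle is pinning down the precise exponential result to cite. It must work on a general Banach space, for evolution families on $\mathbb R$ (not semigroups or invertible cocycles), under a bounded growth hypothesis, and with uniqueness of bounded solutions built into the admissibility condition. I would cite the standard theorem for $C_0$-bounded-growth evolution families on the line (Latushkin/Minh--R\"abiger--Schnaubelt type), and check that its dichotomy definition matches ours, in particular the invertibility on $\Ker P$ in (muc2). If the cited result is stated for strongly continuous families, I would note that $T^{\mu,exp}$ inherits strong continuity from $T$ since the rescaling is a homeomorphism.
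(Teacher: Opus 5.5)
Your proposal is correct and is essentially the paper's proof. You rescale with $\eta(t)=e^t$, use $\|T^{\mu,exp}(t,s)\|\le Ke^{a(t-s)}$, transport the admissibility equation through $r=\mu^{-1}(e^\rho)$ (turning $\frac{\mu'(r)}{\mu(r)}\,dr$ into $d\rho$), apply the exponential Perron-type theorem (the paper cites \cite[Theorem 4.28]{CL}), and pass back via Theorem~\ref{thmc} on $\mathbb R$ with $\ep=0$; the only cosmetic difference is that the paper checks uniqueness separately through the homogeneous equation, while your bijection $y\mapsto y\circ\mu^{-1}\circ\exp$ transfers existence and uniqueness in both directions at once.
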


\begin{proof}
$(ii)\implies (i)$  We consider the evolution family $\{T^{\mu, exp}(t, s)\}_{t\ge s}$ as in the proof of Theorem~\ref{Stern}.  Note that 
\[
\|T^{\mu, exp}(t, s)\| \le Ke^{a(t-s)}, \quad t\ge s.
\]
We claim that $\{T^{\mu, exp}(t,s)\}_{t\ge s }$ admits a uniform exponential dichotomy. To this end, we take an arbitrary $y\in Y_\infty$, and define $\bar y\colon \mathbb R\to X$ by 
\[
\bar y(t):=y(\ln \mu(t)), \quad t\in \mathbb R.
\]
Clearly, $\bar y\in Y_\infty$. Due to our assumption, there exists a unique $\bar x\in Y_\infty$ such that 
\begin{equation}\label{950adm}
\bar x(t)=T(t, s)\bar x(s)+\int_s^t \frac{\mu'(r)}{\mu(r)}T(t, r)\bar y(r)\, dr, \quad t\ge s.
\end{equation}
We define $x\colon \mathbb R\to X$ by 
\[
x(t):=\bar x(\mu^{-1}(e^t)), \quad t\in \mathbb R.
\]
Then $x\in Y_\infty$. Moreover, \eqref{950adm} and the substitution $r=\mu^{-1}(e^z)$ gives that
\[
\begin{split}
x(t)=\bar x(\mu^{-1}(e^t)) &=T(\mu^{-1}(e^t), \mu^{-1}(e^s))\bar x(\mu^{-1}(e^s))\\
&\phantom{=}+\int_{\mu^{-1}(e^s)}^{\mu^{-1}(e^t)}\frac{\mu'(r)}{\mu(r)}T(\mu^{-1}(e^t), r)y(\ln \mu(r))\, dr \\
&=T^{\mu, exp}(t, s)x(s)+\int_s^t T^{\mu, \exp}(t, z)y(z)\, dz
\end{split}
\]
for $t\ge s$. We have proved that for each $y\in Y_\infty$, there exists $x\in Y_\infty$ such that 
\begin{equation}\label{1020adm}
x(t)=T^{\mu, \exp}(t, s)x(s)+\int_s^t T^{\mu, \exp}(t, z)y(z)\, dz, \quad \text{for $t\ge s$.}
\end{equation}
Moreover, such $x$ is unique. Indeed, suppose that $x_i$, $i=1, 2$ are such that 
\[
x_i(t)=T^{\mu, \exp}(t, s)x_i(s)+\int_s^t T^{\mu, \exp}(t, z)y(z)\, dz \quad \text{for $t\ge s$ and $i=1, 2$.}
\]
Then, $\tilde x:=x_1-x_2\in Y_\infty$ and 
\[
\tilde x(t)=T^{\mu, exp}(t, s)\tilde x(s), \quad t\ge s.
\]
Let 
\[
\underline{x}(t):=\tilde x(\ln \mu(t)), \quad t\in \mathbb R.
\]
Then, 
\[
\underline{x}(t)=T(t, s)\underline{x}(s), \quad t\ge s, 
\]
which means that 
\[
\underline{x}(t)=T(t, s)\underline{x}(s)+\int_s^t \frac{\mu'(r)}{\mu(r)}T(t, r)w(r)\, dr \quad \text{for $t\ge s$,}
\]
where $w\in Y_\infty$ is given by $w(t)=0$ for $t\in \mathbb R$. Due to the uniqueness in $(ii)$, we conclude that $\underline{x}=0$, which means that $x_1=x_2$. 

It now follows from~\cite[Theorem 4.28]{CL} that $\{T^{\mu, exp}(t, s)\}_{t\ge s}$ admits a uniform exponential dichotomy, which implies $(i)$.

$(i)\implies (ii)$ It follows by an analogous reasoning. Since $\{T(t, s)\}_{t\ge s}$ admits a uniform $\mu$-dichotomy, $\{T^{\mu, exp}(t, s)\}_{t\ge s}$ admits a uniform exponential dichotomy. By~\cite[Theorem 4.28]{CL}, for each $y\in Y_\infty$, there exists a unique $x\in Y_\infty$ such that~\eqref{1020adm}. By reversing the arguments in the proof of the implication $(ii)\implies (i)$, one can show that $(ii)$ holds.
\end{proof}

\begin{remark}
Although to the best of our knowledge Theorem~\ref{TM-ADMISS} has not been formulated earlier, a similar result dealing with evolution families over $\mathbb R^+$ follows from the main results in~\cite{DLP}. However, our arguments are again simpler than those in~\cite{DLP}. 
\end{remark}

\subsection{Lyapunov-type characterization of general dichotomies}
Throughout this subsection, $X$ will be an arbitrary Hilbert space and $\mu \colon \mathbb R\to (0, \infty)$ is a continuously differentiable growth rate such that $\mu'(t)>0$ for each $t$. In addition, we consider~\eqref{209lde} with $A\colon \mathbb R\to \mathcal B(X)$  continuous.

In the particular case when $\mu(t)=e^t$, the following result coincides with~\cite[Corollary 4.48]{CL}.
\begin{theorem}\label{Lyapunov}
Suppose that there exists $M>0$ such that 
\begin{equation}\label{1010b}
\|A(t)\| \le M \frac{\mu'(t)}{\mu(t)}, \quad t\in \mathbb R.
\end{equation}
The following properties are equivalent:
\begin{enumerate}
\item[(i)] \eqref{209lde} admits a uniform $\mu$-dichotomy;
\item[(ii)] there exists a bounded and differentiable map $W\colon \mathbb R\to \mathcal B(X)$ such that $W(t)$ is invertible and self-adjoint for each $t$, $\sup_{t\in \mathbb R}\|W(t)^{-1}\|<\infty$, and for some $\delta>0$ we have 
\begin{equation}\label{lyap}
\left \langle W'(t)x+A(t)^*W(t)x+W(t)A(t)x, x \right \rangle \le -\delta\frac{\mu'(t)}{\mu(t)}\langle x, x\rangle,
\end{equation}
for every $t\in \mathbb R$ and $x\in X$.
\end{enumerate}
\end{theorem}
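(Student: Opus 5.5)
The plan is to reduce Theorem~\ref{Lyapunov} to its exponential counterpart, \cite[Corollary 4.48]{CL}, by the time rescaling with $\eta(t)=e^t$, in the spirit of the proofs of Theorems~\ref{Stern} and~\ref{TM-ADMISS}. Consider $\tilde A(t)=\frac{e^t}{\mu'(\mu^{-1}(e^t))}A(\mu^{-1}(e^t))$ as in~\eqref{AA}. By the chain rule, the evolution family of $x'=\tilde A(t)x$ is $T^{\mu,exp}(t,s)=T(\mu^{-1}(e^t),\mu^{-1}(e^s))$. Writing $\tau=\mu^{-1}(e^t)$, we have $\mu(\tau)=e^t$, so $\frac{e^t}{\mu'(\tau)}=\frac{\mu(\tau)}{\mu'(\tau)}$. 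Hence~\eqref{1010b} gives $\|\tilde A(t)\|\le M$, so $\tilde A$ is continuous and bounded, which is exactly the setting of \cite[Corollary 4.48]{CL}. By the $\mathbb R$-version of Theorem~\ref{thmc} with $\ep=0$ (Remark~\ref{remc1}), statement (i) is equivalent to the existence of a uniform exponential dichotomy for $x'=\tilde A(t)x$.

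Next, I will transfer Lyapunov functions in both directions by composing with the time change. Given $W$ as in (ii), set $\tilde W(t):=W(\mu^{-1}(e^t))$. This map is bounded, differentiable, invertible and self-adjoint, with $\sup\|\tilde W(t)^{-1}\|<\infty$, and $\tilde W'(t)=\frac{e^t}{\mu'(\tau)}W'(\tau)$. Multiplying~\eqref{lyap} at the time $\tau$ by the positive factor $\frac{e^t}{\mu'(\tau)}=\frac{\mu(\tau)}{\mu'(\tau)}$ yields
\[
\langle \tilde W'(t)x+\tilde A(t)^*\tilde W(t)x+\tilde W(t)\tilde A(t)x,x\rangle\le -\delta\langle x,x\rangle .
\]
This is the exponential Lyapunov condition. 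Conversely, given $\tilde W$ for $\tilde A$, define $W(\tau):=\tilde W(\ln\mu(\tau))$. The derivative picks up the factor $\mu'(\tau)/\mu(\tau)$, and the same computation run backwards recovers~\eqref{lyap}. All of this is a chain-rule identity once one notes that $A(\tau)=\frac{\mu'(\tau)}{\mu(\tau)}\tilde A(\ln\mu(\tau))$.

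Putting these together: (i) is equivalent to a uniform exponential dichotomy for $\tilde A$. By \cite[Corollary 4.48]{CL}, which applies because $\tilde A$ is bounded and continuous, this is equivalent to the existence of $\tilde W$. By the transfer above, that is equivalent to (ii).

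The main obstacle is checking that the exact hypotheses of \cite[Corollary 4.48]{CL} are met: continuity and boundedness of $\tilde A$, and the precise regularity demanded of $\tilde W$ (differentiability, possibly strong). One must also confirm that these properties survive the diffeomorphic change of variables $t\mapsto\ln\mu(t)$. This holds because $\mu$ is $C^1$ with $\mu'>0$, so $\ln\mu$ is a $C^1$ diffeomorphism of $\mathbb R$ onto $\mathbb R$; that it is onto uses the growth rate's limits $\mu\to 0$ and $\mu\to\infty$ at $\mp\infty$.
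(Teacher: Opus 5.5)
Your proposal is correct and is essentially the paper's proof. The paper likewise passes to $\tilde A$ from~\eqref{AA}, which is bounded by~\eqref{1010b}, identifies (i) with a uniform exponential dichotomy of $x'=\tilde A(t)x$ via Theorem~\ref{thmc} with $\ep=0$, invokes \cite[Corollary 4.48]{CL}, and transfers the Lyapunov operator through $W(t)=\tilde W(\ln\mu(t))$ and $\tilde W(t)=W(\mu^{-1}(e^t))$, with the chain-rule factor $\mu'/\mu$ cancelling exactly as you describe; your explicit checks (continuity of $\tilde A$, and that $\ln\mu$ is a $C^1$ diffeomorphism of $\mathbb R$ onto $\mathbb R$) are details the paper leaves implicit.
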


\begin{proof}
Let $\tilde A\colon \mathbb R\to \mathcal B(X)$ be given by~\eqref{AA}. Note that~\eqref{1010b} implies that $\sup_{t\in \mathbb R}\|\tilde A(t)\|<\infty$.

$(i)\implies (ii)$ Assume that~\eqref{209lde} admits a uniform $\mu$-dichotomy. Then the  linear equation
\begin{equation}\label{1:14}
x'=\tilde A(t)x, \quad t\in \mathbb R
\end{equation}
admits a uniform exponential dichotomy.  By~\cite[Corollary 4.48]{CL} there is a bounded and differentiable map $\tilde W\colon \mathbb R\to \mathcal B(X)$ such that $\tilde W(t)$ is self-adjoint and invertible for each $t$, $\sup_{t\in \mathbb R}\|\tilde W(t)^{-1}\|<\infty$, and for some $\delta>0$ we have 
\begin{equation}\label{1104}
    \left \langle \tilde W'(t)x+\tilde A(t)^*\tilde W(t)x+\tilde W(t)\tilde A(t)x, x\right \rangle \le -\delta \langle x, x \rangle,
\end{equation}
for every $t\in \mathbb R$ and $x\in X$. Set $W(t):=\tilde W(\ln \mu(t))$, $t\in \mathbb R$. Then $W$ is bounded and  differentiable, $W(t)$ is self-adjoint and invertible for each $t$ and $\sup_{t\in \mathbb R}\|W(t)^{-1}\|<\infty$. Moreover, \eqref{1104} implies that 
\[
\left \langle \tilde W'(\ln \mu(t))x+\frac{\mu(t)}{\mu'(t)}A(t)^*W(t)x+\frac{\mu(t)}{\mu'(t)}W(t)A(t)x, x\right \rangle \le -\delta \langle x, x\rangle,
\]
for each $t\in \mathbb R$ and $x\in X$. By taking into account that $W'(t)=\frac{\mu'(t)}{\mu(t)}\tilde W'(\ln \mu(t))$, we conclude that~\eqref{lyap} holds.

$(ii)\implies (i)$ It follows by reversing the arguments in the proof that $(i)$ implies $(ii)$. More precisely, suppose that there exists $W\colon \mathbb R\to \mathcal B(X)$ as in $(ii)$. Set \[\tilde W(t):=W(\mu^{-1}(e^t)), \quad  t\in \mathbb R.\]
Then $\tilde W\colon \mathbb R\to \mathcal B(X)$ is bounded  and differentiable, $\tilde W(t)$ is self-adjoint and invertible for each $t\in \mathbb R$, $\sup_{t\in \mathbb R}\|\tilde W(t)^{-1}\|<\infty$ and~\eqref{1104} holds. Then~\eqref{1:14} admits a uniform exponential dichotomy, which implies $(i)$.
\end{proof}

\begin{remark}
To the best of our knowledge, Theorem~\ref{Lyapunov} is a completely new result as the previous results devoted to the characterization of general dichotomies in terms of quadratic Lyapunov functions where restricted to the finite-dimensional setting (see~\cite{BCV}).
\end{remark}

\subsection{Stable manifolds for semilinear integral equations}
We consider an evolution family $\{T(t, s)\}_{t\ge s\ge 0}$ on an arbitrary Banach space $X$. Let $f\colon \mathbb R^+\times X\to X$ be a continuous map and consider an integral equation
\begin{equation}\label{integeq}
u(t)=T(t, s)u(s)+\int_s^t T(t, r)f(r, u(r))\, dr,  \quad t\ge s.
\end{equation}

The following definition is taken from~\cite[Definition 4.2]{Huy}.
\begin{definition}
A set $\mathbf S\subset \mathbb R^+\times X$ is said to be an \emph{invariant stable manifold} for~\eqref{integeq} if for every $t\ge 0$, there is a splitting $X=X_0(t)\oplus X_1(t)$ of $X$ into a direct sum of two closed subspaces $X_i(t)$, $i=0, 1$ such that 
\[
\inf_{t\ge 0}\inf \{\|v_0+v_1\|: \ v_i\in X_i(t) \ \text{and} \ \|v_i\|=1 \ \text{for $i=0, 1$}\}>0
\]
and if there exists a family of Lipschitz continuous maps $g_t\colon X_0(t)\to X_1(t)$, $t\ge 0$  with  Lipschitz constants independent of $t$, such that:
\begin{enumerate}
\item[(i)] $\mathbf S=\{(t, v+g_t(v)): \ t\ge 0, \ v\in X_0(t)\}$;
\item[(ii)] for each $t\ge 0$, $\mathbf S_t$ is homeomorphic to $X_0(t)$, where
\[
\mathbf S_t:=\{w\in X: \ (t, w)\in \mathbf  S\}=\{v+g_t(v): \ v\in X_0(t)\};
\]
\item[(iii)] to each $v\in \mathbf S_{t_0}$ there corresponds a unique solution $u$ of~\eqref{integeq} on $[t_0, \infty)$ such that $u(t_0)=v$ and $\sup_{t\ge t_0}\|u(t)\|<\infty$;
\item[(iv)] if $u$ is a solution of~\eqref{integeq} satisfying $u(t_0)\in \mathbf S_{t_0}$ and $\sup_{t\ge t_0}\|u(t)\|<\infty$, then $u(t)\in \mathbf S_t$ for each $t\ge t_0$.
\end{enumerate}
\end{definition}

We need the following result, which follows from~\cite[Theorem 4.6]{Huy}.
\begin{theorem}\label{HuyTM}
Suppose that $f$ satisfies the following:
\begin{enumerate}
\item[(a)] $f(t, 0)=0$ for $t\ge 0$;
\item[(b)] there is $c>0$ such that
\begin{equation}\label{lip}
\|f(t, x)-f(t, y)\| \le c\|x-y\|, \quad \text{for $t\ge 0$ and $x, y\in X$.}
\end{equation}
\end{enumerate}
Moreover, suppose that $\{T(t, s)\}_{t\ge s\ge 0}$ admits a uniform exponential dichotomy. Provided that $c$ is sufficiently small, there is an invariant stable manifold $\mathbf S$ for~\eqref{integeq}. Moreover, there are $\nu, C_\nu>0$ such that for each $t_0>0$ and solutions $u_1, u_2$ of~\eqref{integeq} on $[t_0, \infty)$ such that $u_i(t_0)=v_i+g_{t_0}(v_i)\in \mathbf S_{t_0}$ with $v_i\in X_0(t_0)$, $i=1, 2$, we have 
\[
\|u_1(t)-u_2(t)\| \le C_\nu e^{-\nu (t-t_0)}\|v_1-v_2\|, \quad t\ge t_0.
\]
\end{theorem}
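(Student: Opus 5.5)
The plan is to recognize the statement as a special case of the stable manifold theorem of Huy~\cite[Theorem 4.6]{Huy}, so the work consists of matching hypotheses and translating conclusions rather than building the manifold ourselves. Huy works with an evolution family admitting a uniform exponential dichotomy with projections $\{P(t)\}_{t\ge 0}$ and constants $N,\alpha>0$. The nonlinearity $f$ is required to satisfy $f(t,0)=0$ and to be $\varphi$-Lipschitz, namely $\|f(t,x)-f(t,y)\|\le \varphi(t)\|x-y\|$. Here $\varphi$ is a positive function lying in an admissible Banach function space $E$ on $\mathbb R^+$, for instance $E=L^\infty(\mathbb R^+)$ or the space $M(\mathbb R^+)$ of functions with $\sup_{t\ge 0}\int_t^{t+1}|\varphi|<\infty$.

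First, set $\varphi\equiv c$. Hypothesis (b), that is~\eqref{lip}, says exactly that $f$ is $\varphi$-Lipschitz. The constant function belongs to either choice of $E$ with norm comparable to $c$, say $\|\varphi\|_E=c$ for $E=L^\infty$ or $E=M(\mathbb R^+)$. Hypothesis (a) is Huy's requirement $f(t,0)=0$, and the dichotomy hypothesis is assumed verbatim. Huy's smallness condition has the form $\|\varphi\|_E< \kappa(N,\alpha,E)$, where $\kappa$ is an explicit positive quantity depending only on the dichotomy constants and on constants of the space $E$ (such as the norms of the associated convolution operators). Since $\|\varphi\|_E$ is proportional to $c$, this condition becomes ``$c$ sufficiently small'', which is the form used in the statement.

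Next, read off the conclusion. Huy's theorem produces Lipschitz maps $g_t\colon \operatorname{Im}P(t)\to \operatorname{Ker}P(t)$ whose Lipschitz constants are independent of $t$. Their graphs form a set $\mathbf S$ that satisfies (i)--(iv) of the definition above, so I take $X_0(t)=\operatorname{Im}P(t)$ and $X_1(t)=\operatorname{Ker}P(t)$. The uniform angle condition
\[
\inf_{t\ge 0}\inf\{\|v_0+v_1\|:\ v_i\in X_i(t),\ \|v_i\|=1\}>0
\]
follows from $\sup_t\|P(t)\|\le N$. Indeed, $v_0=P(t)(v_0+v_1)$, so $1\le N\|v_0+v_1\|$ and the infimum is at least $1/N$.

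Finally, the exponential estimate. Huy's theorem gives that two solutions starting on $\mathbf S_{t_0}$ satisfy $\|u_1(t)-u_2(t)\|\le C_\nu e^{-\nu(t-t_0)}\|P(t_0)(u_1(t_0)-u_2(t_0))\|$, for some $0<\nu<\alpha$ and a constant $C_\nu$. With $u_i(t_0)=v_i+g_{t_0}(v_i)$ we have $P(t_0)u_i(t_0)=v_i$, and this is the asserted bound.

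The main obstacle is the bookkeeping in matching Huy's precise smallness condition and admissible-space framework to the plain constant-Lipschitz setting. In particular I must confirm that constant functions are admissible in his sense, and that his exponential attraction statement is phrased in terms of the stable components $v_i$. If his estimate is instead stated with respect to $\|u_1(t_0)-u_2(t_0)\|$, I would convert it using $\|v_i+g_{t_0}(v_i)-v_j-g_{t_0}(v_j)\|\le (1+\mathrm{Lip}(g))\|v_i-v_j\|$, which only changes $C_\nu$.
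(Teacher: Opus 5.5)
Your proposal is correct and follows the paper's route: the paper gives no argument beyond citing Huy's Theorem 4.6. Your choices, namely $\varphi\equiv c$ in an admissible space such as $L^\infty(\mathbb R^+)$, $X_0(t)=\operatorname{Im}P(t)$, $X_1(t)=\operatorname{Ker}P(t)$, and the identity $P(t_0)u_i(t_0)=v_i$, are the bookkeeping the paper leaves implicit. One hypothesis that neither you nor the paper checks, as far as I recall Huy's setup, is his standing assumption that the evolution family be strongly continuous in $(t,s)$ and exponentially bounded; a uniform exponential dichotomy alone does not give this, so strictly speaking it should be added to the statement or verified.
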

We now obtain the version of Theorem~\ref{HuyTM} for general dichotomies.
\begin{theorem}\label{thm:stable-manifold-mu}
Let $\mu \colon [0, \infty)\to [1, \infty)$ be a continuously differentiable growth rate with $\mu'(t)>0$ for each $t$. 
Suppose that $f$ satisfies the following:
\begin{enumerate}
\item[(a)] $f(t, 0)=0$ for $t\ge 0$;
\item[(b)] there is $c>0$ such that
\begin{equation}\label{lip2}
\|f(t, x)-f(t, y)\| \le c\frac{\mu'(t)}{\mu(t)}\|x-y\|, \quad \text{for $t\ge 0$ and $x, y\in X$.}
\end{equation}
\end{enumerate}
Moreover, suppose that $\{T(t, s)\}_{t\ge s\ge 0}$ admits a uniform $\mu$-dichotomy. Provided that $c$ is sufficiently small, there is an invariant stable manifold $\mathbf S$ for~\eqref{integeq}. Moreover, there are $\nu, C_\nu>0$ such that for each $t_0>0$ and solutions $u_1, u_2$ of~\eqref{integeq} on $[t_0, \infty)$ such that $u_i(t_0)=v_i+g_{t_0}(v_i)\in \mathbf S_{t_0}$ with $v_i\in X_0(t_0)$, $i=1, 2$, we have 
\[
\|u_1(t)-u_2(t)\| \le C_\nu \left (\frac{\mu(t)}{\mu(t_0)}\right )^{-\nu}\|v_1-v_2\|, \quad t\ge t_0.
\]
\end{theorem}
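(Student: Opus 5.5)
We reduce the statement to Theorem~\ref{HuyTM} through the time rescaling with $\eta(t)=e^t$, in the same way as in the proof of Theorem~\ref{robustness}. Set $\phi(t):=\mu^{-1}(e^t)$ for $t\ge 0$, and consider $T^{\mu,\eta}(t,s)=T(\phi(t),\phi(s))$. This is an evolution family on $\mathbb R^+$. By Theorem~\ref{thmc}, applied with $\varepsilon=0$, it admits a uniform exponential dichotomy with the same $\nu$ and $N$. Define the rescaled nonlinearity
\[
\tilde f(z,x):=\frac{e^z}{\mu'(\phi(z))}f(\phi(z),x), \quad z\ge 0,\ x\in X.
\]
It is continuous, since $\mu'$ is continuous and positive, and it satisfies $\tilde f(z,0)=0$. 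By \eqref{lip2}, using $\mu(\phi(z))=e^z$, we get $\|\tilde f(z,x)-\tilde f(z,y)\|\le c\|x-y\|$. So $\tilde f$ satisfies (a) and (b) of Theorem~\ref{HuyTM} with the same constant $c$.

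Next we check that solutions correspond. If $u$ solves \eqref{integeq} on $[t_0,\infty)$, then $\tilde u(z):=u(\phi(z))$ solves
\[
\tilde u(t)=T^{\mu,\eta}(t,s)\tilde u(s)+\int_s^t T^{\mu,\eta}(t,z)\tilde f(z,\tilde u(z))\,dz, \quad t\ge s\ge \ln\mu(t_0).
\]
This follows from the substitution $r=\phi(z)$, $dr=e^z/\mu'(\phi(z))\,dz$, exactly as in \eqref{pertubation}. Conversely, $u(t)=\tilde u(\ln\mu(t))$ recovers a solution of \eqref{integeq} from a solution of the rescaled equation. Because $\phi$ is a bijection of $[0,\infty)$ onto itself, boundedness on $[t_0,\infty)$ is equivalent to boundedness on $[\ln\mu(t_0),\infty)$.

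For $c$ small, Theorem~\ref{HuyTM} gives an invariant stable manifold $\tilde{\mathbf S}$ for the rescaled equation. It comes with splittings $X=\tilde X_0(z)\oplus\tilde X_1(z)$ and maps $\tilde g_z$. We pull these back by setting $X_i(t):=\tilde X_i(\ln\mu(t))$ and $g_t:=\tilde g_{\ln\mu(t)}$, and
\[
\mathbf S:=\{(t,w): (\ln\mu(t),w)\in\tilde{\mathbf S}\}.
\]
The uniform angle bound and the $t$-independence of the Lipschitz constants are preserved, because they are infima or suprema over all times and $\ln\mu$ is a bijection. Properties (i) and (ii) are immediate. Properties (iii) and (iv) follow from the correspondence of bounded solutions above, including uniqueness.

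For the estimate, take solutions $u_1,u_2$ with $u_i(t_0)=v_i+g_{t_0}(v_i)$. Their rescalings $\tilde u_i$ start on $\tilde{\mathbf S}_{\ln\mu(t_0)}$ with the same $v_i$. Theorem~\ref{HuyTM} then gives
\[
\|u_1(t)-u_2(t)\|\le C_\nu e^{-\nu(\ln\mu(t)-\ln\mu(t_0))}\|v_1-v_2\|=C_\nu\left(\frac{\mu(t)}{\mu(t_0)}\right)^{-\nu}\|v_1-v_2\|.
\]
The one point needing care is $t_0>0$ versus $\ln\mu(t_0)>0$, which holds since $\mu$ is strictly increasing with $\mu(0)=1$. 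We also need Theorem~\ref{HuyTM} to be applied on the half-line $\mathbb R^+$, which matches the rescaled setting. The rest is bookkeeping with the change of variables.
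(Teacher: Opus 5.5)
Your proposal is correct and follows the paper's route: rescale time via $\tau=\ln\mu(t)$ (pass to $T^{\mu,\eta}$ with $\eta(t)=e^t$), note that $\frac{e^z}{\mu'(\mu^{-1}(e^z))}f(\mu^{-1}(e^z),x)$ is globally Lipschitz with the same constant $c$, apply Theorem~\ref{HuyTM}, and transport the manifold $\mathbf S_t=\{v+g_{\ln\mu(t)}(v): v\in X_0(\ln\mu(t))\}$ and the decay estimate back through the correspondence of (bounded) solutions. Your write-up is slightly more explicit than the paper's on a few routine checks: continuity of the rescaled nonlinearity, preservation of the angle bound and of the Lipschitz constants of $g_t$, and $t_0>0 \iff \ln\mu(t_0)>0$.
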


\begin{proof}
We define $h\colon \mathbb R^+\times X\to X$ by 
\[
h(t, x):=\frac{e^t}{\mu'(\mu^{-1}(e^t))}f(\mu^{-1}(e^t), x), \quad (t, x)\in \mathbb R^+\times X.
\]
Then $h(t, 0)=0$ for $t\ge 0$. Moreover, it follows from~\eqref{lip2} that 
\[
\|h(t, x)-h(t, y)\| \le c\|x-y\|, \quad \text{for $t\ge 0$ and $x, y\in X$.}
\]
In other words, $h$ satisfies the assumptions of Theorem~\ref{HuyTM}. Furthermore, since $\{T(t, s)\}_{t\ge s\ge 0}$ admits a uniform $\mu$-dichotomy, $\{T^{\mu, exp}(t, s)\}_{t\ge s \ge 0}$ admits an exponential dichotomy. We consider the associated integral equation
\begin{equation}\label{inteq2}
u(t)=T^{\mu, exp}(t, s)u(s)+\int_s^t T^{\mu, exp}(t, r)h(r, u(r))\, dr, \quad t\ge s.
\end{equation}
By Theorem~\ref{HuyTM}, \eqref{inteq2} has an invariant stable manifold $\mathbf{\bar S}$ such that for $t\ge 0$,
\[
\mathbf {\bar S}_t=\{w\in X: \ (t, w)\in \mathbf {\bar S}\}=\{v+g_t(v): \ v\in  X_0(t)\},
\]
where $g_t\colon  X_0(t)\to   X_1(t)$ and $X=X_0(t)\oplus  X_1(t)$, $t\ge 0$ is the splitting associated with $\mathbf {\bar S}$ (or dichotomy of $\{T^{\mu, exp}(t, s)\}_{t\ge s}$).

It remains to observe that if $t\mapsto u(t)$ is a (bounded)  solution of~\eqref{inteq2} on $[t_0, \infty)$, then  $t\mapsto u(\ln \mu(t))$ is a (bounded) solution of~\eqref{integeq} on $[\mu^{-1}(e^{t_0}), \infty)$, and if $t\mapsto u(t)$ is a  (bounded) solution of~\eqref{integeq} on $[t_0, \infty)$, then $t\mapsto u(\mu^{-1}(e^t))$ is a (bounded) solution of~\eqref{inteq2} on $[\ln \mu(t_0), \infty)$. The invariant stable manifold $\mathbf S$ for~\eqref{integeq} is thus given by 
\[
\mathbf S_t=\{v+g_{\ln \mu(t)}(v): \ v\in  X_0(\ln \mu(t))\}, \quad t\ge 0.
\]
\end{proof}

\begin{remark}
	Theorem~\ref{thm:stable-manifold-mu} is obtained from Huy's exponential	stable manifold theorem, Theorem 4.6 in~\cite{Huy}, through the	time rescaling $\tau=\ln\mu(t)$.
	In fact, a uniform $\mu$-dichotomy for the original evolution family becomes a uniform exponential dichotomy after this time change of variables. Moreover, the factor $\mu'(t)/\mu(t)$ in~\eqref{lip2} exactly compensates for the Jacobian of the change of time,	so that the transformed nonlinearity satisfies the uniform Lipschitz condition required in Theorem 4.6 in~\cite{Huy}. Finally, exponential contraction in the new time variable, $e^{-\nu(\tau-\tau_0)}$, becomes $\left(\mu(t)/\mu(t_0)\right)^{-\nu}$ in the original time variable. Thus, the invariant stable manifold, together with the  quantitative decay estimate for trajectories on it, is transported directly from Huy's
	exponential setting.
	
	Stable manifold theorems for nonautonomous equations under nonuniform exponential, nonuniform polynomial, and more general nonuniform dichotomies have also been established in~\cite{BV2,BS3,BS4}. Those results allow more general nonuniform linear dynamics and impose different conditions on the nonlinear perturbation. Therefore, they are not directly comparable with Theorem~\ref{thm:stable-manifold-mu}. 
	
	The specific feature of our result is that, under hypotheses exactly adapted to the time change	of variables $\tau=\ln\mu(t)$, a global Lipschitz stable manifold for a uniform $\mu$-dichotomy follows directly from the corresponding exponential result, without requiring a new graph-transform construction.
\end{remark}

	\section*{Acknowledgements}  
This paper has been funded by the European Union-NextGenerationEU-StatDin-MatAn-uniri-iz-25-108 (Davor Dragi\v cevi\' c).


\begin{thebibliography}{99}

\bibitem{B}
\newblock L. Backes,
\newblock A $(\mu, \nu)$-dichotomy spectrum,
\newblock \emph{J. Math. Anal. Appl.}  \textbf{549} (2025), Article: 129482.

\bibitem{BD}
\newblock L. Backes and D. Dragi\v cevi\' c,
\newblock A characterization of $(\mu, \nu)$-dichotomies via admissibility,
\newblock \emph{Math. Nachr.} \textbf{298} (2025), 2547--2569.

\bibitem{BDZ}
\newblock L. Backes, D. Dragi\v cevi\' c and W. Zhang,
\newblock Smooth linearization of nonautonomous dynamics under general dichotomic behaviour,
\newblock preprint, https://arxiv.org/pdf/2605.04184

\bibitem{BCV}
\newblock L. Barreira, J. Chu and C. Valls,
\newblock Lyapunov Functions for General Nonuniform Dichotomies,
\newblock \emph{Milan. J. Math.} \textbf{81} (2013), 153--169.

\bibitem{BDV}
\newblock L. Barreira, D. Dragi\v cevi\' c and C. Valls,
\newblock Admissibility on the half line for evolution families,
\newblock \emph{J. Anal. Math.} \textbf{132} (2017), 157--176.

\bibitem{BDV2}
\newblock L. Barreira, D. Dragi\v cevi\' c and C. Valls, 
\newblock Strong nonuniform spectrum for arbitrary growth rates, 
\newblock \emph{Commun. Contemp. Math.} \textbf{19} (2017),  1650008, 25 pp.

\bibitem{BV2}
\newblock  L. Barreira and C. Valls,
\newblock  Stable manifolds for nonautonomous equations without exponential dichotomy,
\newblock \emph{J. Differential Equations} \textbf{221} (2006), 58--90.

\bibitem{BV-DCDS}
\newblock L. Barreira and C. Valls, 
 \newblock Growth rates and nonuniform hyperbolicity, \newblock \emph{Discrete Contin. Dyn. Syst.} \textbf{22} (2008), 509--528.

\bibitem{BV-09}
\newblock L. Barreira and C. Valls, 
\newblock A Grobman-Hartman theorem for general nonuniform exponential dichotomies, 
\newblock \emph{J. Funct. Anal.} \textbf{257} (2009), 1976-1993.

\bibitem{BVJFA}
\newblock L. Barreira and C. Valls,
\newblock  Stable invariant manifolds for parabolic dynamics,
\newblock \emph{J. Funct. Anal.} \textbf{257} (2009), 1018--1029.

\bibitem{BV}
\newblock L. Barreira and C. Valls, 
\newblock Robustness of noninvertible dichotomies,
\newblock \emph{J. Math. Soc. Japan} \textbf{67} (2015), 293--317.

\bibitem{BS2} 
\newblock A. J. Bento and C. M. Silva, 
\newblock Nonuniform $(\mu,\nu)$-dichotomies and local dynamics of difference equations, 
\newblock \emph{Nonlinear Anal.} \textbf{75} (2012), 78--90.

\bibitem{BS3}
\newblock  A. J. G. Bento and C. M. Silva,
\newblock  Stable manifolds for non-autonomous equations with non-uniform polynomial
dichotomies,
\newblock \emph{Q. J. Math.} \textbf{63} (2012), 275--308.

\bibitem{BS4}
\newblock  A. J. G. Bento and C. M. Silva,
\newblock  Generalized nonuniform dichotomies and local stable manifolds,
\newblock  \emph{J. Dyn. Differential Equations} \textbf{25} (2013), 1139--1158.

\bibitem{CR}
\newblock A. Casta\~{n}eda and G. Robledo,
\newblock A topological equivalence result for a family of nonlinear difference systems having generalized exponential dichotomy, 
\newblock \emph{J. Difference Equ. Appl.} \textbf{22} (2016),  1271--1291.

\bibitem{CJ}
\newblock A. Casta\~{n}eda and N. Jara,
\newblock A generalization of Siegmund's normal forms theorem to systems with $\mu$-dichotomies,
\newblock \emph{J. Differential Equations} \textbf{410} (2024), 449--480.

\bibitem{CL}
\newblock C. Chicone and Y. Latushkin,
\newblock Evolution semigroups in dynamical systems and differential equations,
\newblock Math. Surveys Monogr., 70
\newblock American Mathematical Society, Providence, RI, 1999. x+361 pp.

\bibitem{Chu}
\newblock J. Chu,
\newblock Robustness of nonuniform behavior for discrete dynamics,
\newblock \emph{Bull. Sci. Math.} \textbf{137} (2013), 1031--1047.

\bibitem{CDS}
\newblock L. V. Coung, T. S. Doan and S. Siegmund, 
\newblock A Sternberg theorem for nonautonomous differential equations, 
\newblock \emph{J Dyn. Diff. Equat.} \textbf{31} (2019), 1279--1299.

\bibitem{D1}
\newblock D. Dragi\v cevi\'c, 
\newblock Admissibility and nonuniform polynomial dichotomies, 
\newblock \emph{Math. Nachr.} \textbf{293} (2019), 226--243.

\bibitem{D2}
\newblock D. Dragi\v cevi\' c, 
\newblock Admissibility and polynomial dichotomies for evolution families,
\newblock Commun. Pure Appl. Anal. \textbf{19} (2020),  1321--1336.

\bibitem{DD}
\newblock D. Dragi\v cevi\' c
\newblock $h$-dichotomies via noncritical uniformity and expansiveness for evolution families,
\newblock \emph{Math. Ann.} \textbf{395} (2026), 17pp.

\bibitem{DLP}
\newblock D. Dragi\v cevi\' c, N. Lupa and N. Jur\v cevi\' c Pe\v cek,
\newblock Admissibility and general dichotomies for evolution families,
\newblock \emph{Electron. J. Qual. Theory Differ. Equ.} (2020), Paper No. 58, 19 pp.

\bibitem{DSS1}
\newblock D. Dragi\v cevi\' c, A. L. Sasu and B. Sasu, 
\newblock On Polynomial Dichotomies of Discrete Nonautonomous
Systems on the Half-Line, 
 \newblock \emph{Carpathian J. Math.} \textbf{38} (2022), 663--680.

\bibitem{DS}
\newblock D. Dragi\v cevi\' c and C. M. Silva,
\newblock Generalized dichotomies via time rescaling,
\newblock \emph{Discrete and Continuous Dynamical Systems S} \textbf{18} (2025), 3917-3944.

\bibitem{Huy}
\newblock N. T. Huy,
\newblock Invariant manifolds of admissible classes for semi-linear evolution equations, 
\newblock \emph{J. Differential Equations} \textbf{246} (2009), 1820--1844.

\bibitem{GJ}
\newblock C. A. Gallegos and N. Jara,
\newblock The interplay of $\mu$-dichotomy, bounded growth, and spectral properties via growth rate comparisons,
\newblock J. Differential Equations \textbf{480} (2026), Paper No. 114645, 24pp.


\bibitem{JG}
\newblock N. Jara and C. A. Gallegos,
\newblock Spectrum invariance dilemma for nonuniformly kinematically similar systems,
\newblock \emph{Math. Ann.} \textbf{391} (2025), 2255--2280.

\bibitem{JLR}
\newblock N. Jara, I. P. Longo and M. Rasmussen,
\newblock Dichotomies Faster or Slower than exponential are Irrelevant for Skew-Product Flows,
\newblock preprint, https://arxiv.org/pdf/2605.23173

\bibitem{Martin}
\newblock R. H. Martin, Jr., 
\newblock Conditional stability and separation of solutions to differential equations, 
\newblock J. Differential Equations \textbf{13} (1973), 81--105.

\bibitem{M}
\newblock J. S. Muldowney, 
\newblock Dichotomies and asymptotic behaviour for linear differential systems, 
\newblock \emph{Trans. Amer. Math. Soc.} \textbf{283} (1984), 465--484.

\bibitem{NP}
\newblock R. Naulin and M. Pinto, 
\newblock Roughness of $(h,k)$-dichotomies, 
\newblock \emph{J. Differential Equations} \textbf{118} (1995), 20--35.

\bibitem{Per}
\newblock O. Perron, 
\newblock Die Stabilit\"atsfrage bei Differentialgleichungen, 
\newblock \emph{Math. Z.} \textbf{32} (1930), 703--728.

\bibitem{Siegmund}
\newblock S. Siegmund
\newblock Dichotomy spectrum for nonautonomous differential equations
\newblock \emph{J. Dyn. Diff. Equat.} \textbf{14} (2002), 243--258.

\bibitem{SilvaCPAA}
\newblock C. Silva, 
\newblock Admissibility and generalized nonuniform dichotomies for discrete dynamics, 
\newblock \emph{Comm. Pure Appl. Anal.} \textbf{20} (2021), 3419--3443.

\bibitem{Silva}
\newblock C. Silva,
\newblock Nonuniform $\mu$-dichotomy spectrum and kinematic similarity, 
\newblock \emph{J. Differential Equations}  \textbf{375} (2023), 618--652.

\bibitem{Torres}
\newblock F. Torres,
\newblock A Green function approach to smooth nonautonomous topological equivalence with unbounded nonlinearities under $(\mu, \nu)$-dichotomies,
\newblock preprint, https://arxiv.org/pdf/2608.14715

\end{thebibliography}
\end{document}